\definecolor{ao(english)}{rgb}{0.0, 0.5, 0.0}
\newtheorem{lemma}{Lemma}
\newtheorem{theorem}{Theorem}
\newtheorem{proposition}{Proposition}
\newtheorem{corollary}{Corollary}
\begin{document}
\title{Subgroups of $SL_2(\mathbb{Z})$ characterized by certain continued fraction representations}
\author{Sandie Han, Ariane M. Masuda, Satyanand Singh, and Johann Thiel}
\date{\today}
\address{Department of Mathematics, New York City College of Technology, The City University of New York (CUNY), 300 Jay Street,
Brooklyn, New York 11201}
\email{\{shan,amasuda,ssingh,jthiel\}@citytech.cuny.edu}
\subjclass[2010]{Primary: 20H10; Secondary: 20E05, 20M05, 11A55}
\keywords{Matrix monoid, matrix group, membership problem, continued fraction}

\begin{abstract}
For positive integers $u$ and $v$, let $L_u=\begin{bmatrix} 1 & 0 \\ u & 1 \end{bmatrix}$ and $R_v=\begin{bmatrix} 1 & v \\ 0 & 1 \end{bmatrix}$. Let $S_{u,v}$ be the monoid generated by $L_u$ and $R_v$, and $G_{u,v}$ be the group generated by $L_u$ and $R_v$. In this paper we expand on a characterization of matrices $M=\begin{bmatrix}a & b \\c & d\end{bmatrix}$ in $S_{k,k}$ and $G_{k,k}$ when $k\geq 2$ given by Esbelin and Gutan to $S_{u,v}$ when $u,v\geq 2$ and $G_{u,v}$ when $u,v\geq 3$. We give a simple algorithmic way of determining if $M$ is in $G_{u,v}$ using a recursive function and the short continued fraction representation of $b/d$. 
\end{abstract}
\maketitle

\section{Introduction}
For positive integers $u$ and $v$, let $L_u=\begin{bmatrix} 1 & 0 \\ u & 1 \end{bmatrix}$ and $R_v=\begin{bmatrix} 1 & v \\ 0 & 1 \end{bmatrix}$. Let $S_{u,v}$ be the monoid generated by $L_u$ and $R_v$, and $G_{u,v}$ be the group generated by $L_u$ and $R_v$. In this case, the membership problem refers to the issue of  determining if there are relatively simple descriptions of the matrices in $S_{u,v}$ and $G_{u,v}$. We begin with some definitions and current results on this problem.

Using similar notation to that in~\cite{EG1,EG2}, let 
$$\mathscr{S}_{u,v} = \left\{\begin{bmatrix}1+uvn_1 & vn_2\\ un_3 & 1+uvn_4\end{bmatrix}\in SL_2(\mathbb{N})\enskip\colon\enskip(n_1,n_2,n_3,n_4)\in\mathbb{N}^4\right\}$$ and $$\mathscr{G}_{u,v} = \left\{\begin{bmatrix}1+uvn_1 & vn_2\\ un_3 & 1+uvn_4\end{bmatrix}\in SL_2(\mathbb{Z})\enskip\colon\enskip(n_1,n_2,n_3,n_4)\in\mathbb{Z}^4\right\}.$$

\begin{proposition}\label{GuvIN}
For any integers $u$ and $v$, we have that $\mathscr{S}_{u,v}$ is a monoid and $\mathscr{G}_{u,v}$ is a group. Furthermore, for any integers $u,v\geq 2$, we have that $S_{u,v}\subseteq\mathscr{S}_{u,v}$ and $G_{u,v}\subseteq\mathscr{G}_{u,v}$.
\end{proposition}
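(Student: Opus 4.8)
The plan is to establish everything by direct computation, deducing the two containments at the end from the universal property of generated submonoids and subgroups.

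\emph{Monoid and group structure.} First, the identity matrix belongs to both $\mathscr{S}_{u,v}$ and $\mathscr{G}_{u,v}$, being the element with $n_1=n_2=n_3=n_4=0$. For closure under multiplication, I would take two elements
$$A=\begin{bmatrix}1+uvn_1 & vn_2\\ un_3 & 1+uvn_4\end{bmatrix},\qquad B=\begin{bmatrix}1+uvm_1 & vm_2\\ um_3 & 1+uvm_4\end{bmatrix}$$
and expand $AB$ entrywise. Each off-diagonal entry of $AB$ is visibly a multiple of $v$ (upper right) or of $u$ (lower left); each diagonal entry equals $1$ plus an integer multiple of $uv$, since it is the sum of a product of two terms each congruent to $1$ modulo $uv$ and a product of a multiple of $u$ with a multiple of $v$. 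So $AB$ again has the required shape, with new parameters that are explicit integer polynomials in the old ones; its entries are integers, and are nonnegative if those of $A$ and $B$ are; and $\det(AB)=\det(A)\det(B)=1$. Hence $\mathscr{S}_{u,v}$ is closed under multiplication inside $SL_2(\mathbb N)$ and $\mathscr{G}_{u,v}$ inside $SL_2(\mathbb Z)$. For $\mathscr{G}_{u,v}$ I would also check closure under inverses: for any $M\in SL_2(\mathbb Z)$ one has $M^{-1}=\begin{bmatrix}d & -b\\ -c & a\end{bmatrix}$, which replaces the tuple $(n_1,n_2,n_3,n_4)$ by $(n_4,-n_2,-n_3,n_1)\in\mathbb Z^4$ while keeping determinant $1$. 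Therefore $\mathscr{S}_{u,v}$ is a submonoid of $SL_2(\mathbb N)$ and $\mathscr{G}_{u,v}$ is a subgroup of $SL_2(\mathbb Z)$.

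\emph{Containments.} I would then simply locate the generators: $L_u$ corresponds to $(n_1,n_2,n_3,n_4)=(0,0,1,0)$ and $R_v$ to $(0,1,0,0)$, and both have nonnegative integer entries and determinant $1$, so $L_u,R_v\in\mathscr{S}_{u,v}\subseteq\mathscr{G}_{u,v}$. Since $\mathscr{S}_{u,v}$ is a monoid containing $L_u$ and $R_v$, it contains every finite product of these matrices, i.e. $S_{u,v}\subseteq\mathscr{S}_{u,v}$; since $\mathscr{G}_{u,v}$ is a group containing $L_u$ and $R_v$, it contains the group they generate, i.e. $G_{u,v}\subseteq\mathscr{G}_{u,v}$.

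I do not expect a real obstacle: the content is the entrywise expansion of $AB$, where the only thing needing care is the bookkeeping of which terms are divisible by $u$, by $v$, or congruent to $1$ modulo $uv$, together with the (automatic) observation that a product of nonnegative-entry matrices again has nonnegative entries, so that membership in $SL_2(\mathbb N)$ is preserved for the monoid statement. Note that the hypothesis $u,v\ge 2$ is not actually used for this proposition; it is recorded here because it is the standing assumption throughout the rest of the paper.
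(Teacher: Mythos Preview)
Your proposal is correct and, for the monoid/group verification, identical in spirit to the paper's: the paper displays the same explicit product formula and the same inverse formula as the two computations underpinning closure.

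For the containments $S_{u,v}\subseteq\mathscr{S}_{u,v}$ and $G_{u,v}\subseteq\mathscr{G}_{u,v}$ your route is slightly cleaner than the paper's. The paper invokes the fact that $L_u$ and $R_v$ generate freely when $u,v\ge 2$, obtains from this a normal form for each element as an alternating word in powers of $L_u$ and $R_v$, and then appeals to induction on that word (citing the $u=v$ case in the literature). You instead observe that $L_u,R_v\in\mathscr{S}_{u,v}\subseteq\mathscr{G}_{u,v}$ and conclude directly from the universal property of generated submonoids and subgroups. Your argument is more elementary---it neither needs nor uses freeness---and, as you correctly note, it shows that the hypothesis $u,v\ge 2$ plays no role in this particular proposition.
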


The proof that $\mathscr{S}_{u,v}$ is a monoid and $\mathscr{G}_{u,v}$ is a group follows readily from two computations:

\begin{align*}
& \begin{bmatrix}1+uvn_1 & vn_2\\ un_3 & 1+uvn_4\end{bmatrix}\cdot\begin{bmatrix}1+uvm_1 & vm_2\\ um_3 & 1+uvm_4\end{bmatrix} \\ =& \begin{bmatrix}1+uv(n_1+m_1+uvn_1m_1+n_2m_3) & v((1+uvn_1)m_2+(1+uvm_4)n_2)\\ u((1+uvm_1)n_3+(1+uvn_4)m_3) & 1+uv(n_4+m_4+uvn_4m_4+n_3m_2)\end{bmatrix}
\end{align*}

and

\begin{align*}
& \begin{bmatrix}1+uvn_1 & vn_2\\ un_3 & 1+uvn_4\end{bmatrix}^{-1} = \begin{bmatrix}1+uvn_4 & -vn_2\\ -un_3 & 1+uvn_1\end{bmatrix}.
\end{align*}

Note that, since $S_{u,v}$ and $G_{u,v}$ are generated freely~\cite{N1,LS} for $u,v\geq 2$, every matrix in either can be written as an alternating product of non-zero powers of $L_u$ and $R_v$ in a unique way. From this fact, the rest of the proof of Proposition~\ref{GuvIN} follows by induction (see~\cite[Proposition 1, Theorem 4]{CGS}  for the $u=v$ version).

The following classical result due to Sanov~\cite{S} shows that $\mathscr{G}_{2,2}\subseteq G_{2,2}$. As Theorem~\ref{cgs} below shows, this does not hold true in general.

\begin{theorem}[Sanov~\cite{S}]\label{sanov}
We have that $G_{2,2}=\mathscr{G}_{2,2}$.
\end{theorem}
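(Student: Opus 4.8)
The inclusion $G_{2,2}\subseteq\mathscr{G}_{2,2}$ is already furnished by Proposition~\ref{GuvIN}, so the real task is the reverse inclusion $\mathscr{G}_{2,2}\subseteq G_{2,2}$, and the plan is to prove it by a Euclidean-style descent. First I would record the structural facts that drive the argument: $L_2$ and $R_2$ both lie in $\mathscr{G}_{2,2}$ and, by Proposition~\ref{GuvIN}, $\mathscr{G}_{2,2}$ is a group, so any matrix produced from a given $M\in\mathscr{G}_{2,2}$ by left-multiplying by powers of $L_2$ and $R_2$ still lies in $\mathscr{G}_{2,2}$; in particular its diagonal entries remain $\equiv 1\pmod 4$ (so, being odd, are nonzero) and its off-diagonal entries remain even.

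Then, writing $M=\begin{bmatrix}a&b\\c&d\end{bmatrix}\in\mathscr{G}_{2,2}$, I would induct on $|c|$. In the base case $c=0$ the determinant condition forces $ad=1$, and since $a\equiv d\equiv 1\pmod 4$ this gives $a=d=1$; as $b$ is even, $M=R_2^{b/2}\in G_{2,2}$. For the inductive step with $c\neq 0$ (so $|c|\geq 2$), I would first choose $m$ so that $R_2^{m}M$ has upper-left entry $a'=a+2mc$ of least absolute value; since $a'$ is odd while $|c|$ is even, $0<|a'|<|c|$. Next I would choose $n$ so that $L_2^{n}R_2^{m}M$ has lower-left entry $c''=c+2na'$ of least absolute value; since $c''$ is even while $|a'|$ is odd, $|c''|<|a'|<|c|$. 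The matrix $M'':=L_2^{n}R_2^{m}M$ lies in $\mathscr{G}_{2,2}$ and has strictly smaller lower-left entry, so by induction $M''\in G_{2,2}$, and hence $M=R_2^{-m}L_2^{-n}M''\in G_{2,2}$, completing the descent.

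The step I expect to require the most care is not any single computation but the parity-and-mod-$4$ bookkeeping that makes the descent terminate correctly: one needs the diagonal entries to be odd so that $a\neq 0$ and so that each minimization \emph{strictly} decreases the entry being reduced (an even integer cannot equal an odd one), and one needs them to be $\equiv 1\pmod 4$ — not merely $M\equiv I\pmod 2$ — so that the base case produces $a=d=1$ rather than $a=d=-1$. This is exactly why $\mathscr{G}_{2,2}$, and not the full principal congruence subgroup, is the correct object: $-I\notin\mathscr{G}_{2,2}$, which is consistent with $G_{2,2}$ being free and hence torsion-free. One could alternatively establish freeness of $G_{2,2}$ through the ping-pong lemma for the action on $\mathbb{P}^1(\mathbb{R})$, but that alone does not yield the set equality; the descent above does, and it is precisely the mechanism that the later sections refine using short continued fractions.
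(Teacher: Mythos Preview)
Your proof is correct, but it is not the one in the paper. You give the classical Euclidean descent: reduce $|c|$ by alternately hitting $M$ with powers of $R_2$ (to shrink $|a|$ below $|c|$) and $L_2$ (to shrink $|c|$ below $|a|$), using the odd/even parity of the diagonal/off-diagonal entries to guarantee strict decrease at each step and the $\pmod 4$ condition to pin down the base case $a=d=1$. This is self-contained and does not appeal to anything else in the paper.

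The paper instead proves Theorem~\ref{sanov} via continued fractions, in keeping with its overall theme. It takes the short continued fraction of $b/d$, and repeatedly applies the identity of Lemma~\ref{22ver},
\[
[\alpha,\beta,\gamma]=[\alpha+1,\underbrace{-2,2,-2,\dots,2}_{\beta-1\text{ terms}},-(\gamma+1)],
\]
to push parity defects rightward until $b/d$ has a continued fraction expansion with all even partial quotients (a short case analysis handles the tail, with one case ruled out by the $\mathscr{G}_{2,2}$ congruences exactly as in your base case). The conclusion $M\in G_{2,2}$ then follows by invoking Theorem~\ref{egGp}. So the paper's proof is not self-contained---it rests on the Esbelin--Gutan characterization---but it serves to illustrate how the $u=v=2$ case fits into the continued-fraction framework that drives the rest of the paper, whereas your argument, while cleaner as a stand-alone proof, makes no contact with that framework.
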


\begin{theorem}[Chorna, Geller, and Shpilrain~\cite{CGS}]\label{cgs}
The subgroup $G_{k,k}$ with $k\geq 3$
has infinite index in the group $\mathscr{G}_{k,k}$.
\end{theorem}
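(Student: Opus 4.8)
The plan is to argue by hyperbolic geometry: $\mathscr{G}_{k,k}$ turns out to be a lattice in $PSL_2(\mathbb{R})$ whose covolume is simply too large for $G_{k,k}$ to sit inside it with finite index.

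First I would locate $\mathscr{G}_{k,k}$ among the classical congruence subgroups. Reducing a general element of $\mathscr{G}_{k,k}$ modulo $k$ gives the identity, while every element of the principal congruence subgroup $\Gamma(k^2)$ plainly satisfies the congruences defining $\mathscr{G}_{k,k}$; hence $\Gamma(k^2)\subseteq\mathscr{G}_{k,k}\subseteq\Gamma(k)$. In particular $\mathscr{G}_{k,k}$ has finite index in $SL_2(\mathbb{Z})$, and for $k\ge 3$ it does not contain $-I$, so it descends faithfully to a finite-index subgroup of $PSL_2(\mathbb{Z})$ and is a non-uniform lattice acting freely on the hyperbolic plane $\mathbb{H}$. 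Since $\mathscr{G}_{k,k}\subseteq\Gamma(k)$, the area of $\mathbb{H}/\mathscr{G}_{k,k}$ is at least that of $\mathbb{H}/\Gamma(k)$; using $\mathrm{area}(\mathbb{H}/PSL_2(\mathbb{Z}))=\pi/3$, the index formula $[SL_2(\mathbb{Z}):\Gamma(k)]=k^3\prod_{p\mid k}(1-p^{-2})$, and once more $-I\notin\Gamma(k)$, one gets $\mathrm{area}(\mathbb{H}/\mathscr{G}_{k,k})\ge \tfrac{\pi}{6}k^3\prod_{p\mid k}(1-p^{-2})\ge 4\pi>2\pi$ for all $k\ge 3$.

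Next, suppose toward a contradiction that $m:=[\mathscr{G}_{k,k}:G_{k,k}]$ is finite. By the freeness results quoted in the excerpt, $G_{k,k}$ is free of rank $2$, hence torsion-free, and as a finite-index subgroup of the lattice $\mathscr{G}_{k,k}$ it is itself a lattice, with $\mathrm{area}(\mathbb{H}/G_{k,k})=m\cdot\mathrm{area}(\mathbb{H}/\mathscr{G}_{k,k})\ge 4\pi>2\pi$. But the only complete, finite-area hyperbolic surfaces with free rank-$2$ fundamental group are the once-punctured torus and the thrice-punctured sphere, each with Euler characteristic $-1$ and hence area $-2\pi\chi=2\pi$ by Gauss--Bonnet; since $G_{k,k}$ is torsion-free, $\mathbb{H}/G_{k,k}$ would be one of these. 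This contradiction shows $[\mathscr{G}_{k,k}:G_{k,k}]=\infty$.

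The modular bookkeeping and the Gauss--Bonnet input are routine; the step I would check most carefully is the passage between $SL_2$ and $PSL_2$ — verifying $-I\notin\mathscr{G}_{k,k}$ and $-I\notin G_{k,k}$ for $k\ge 3$ so that both act faithfully on $\mathbb{H}$, and keeping the resulting factor of $2$ straight when converting $[SL_2(\mathbb{Z}):\Gamma(k)]$ into an area. A more hands-on alternative that avoids areas is to run the ping-pong argument for $\langle L_k,R_k\rangle$ precisely enough to see that, for $k\ge 3$, the complement in $\partial\mathbb{H}$ of the two ping-pong sets has nonempty interior; then the limit set of $G_{k,k}$ is a proper (measure-zero) subset of $\partial\mathbb{H}$, so $G_{k,k}$ is a Fuchsian group of the second kind and hence not a lattice, and one concludes as before from the fact that a finite-index subgroup of a lattice is a lattice.
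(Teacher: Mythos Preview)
Your argument is correct. Note, however, that the paper itself does not prove this theorem---it is stated with attribution to Chorna, Geller, and Shpilrain and cited to~\cite{CGS}---so there is no proof in the present paper to compare your approach against.

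On the substance, both routes you sketch are sound. In the covolume argument, the chain $\Gamma(k^2)\subseteq\mathscr{G}_{k,k}\subseteq\Gamma(k)$ is correct, and for $k\ge 3$ one has $-I\notin\Gamma(k)$, so $\mathrm{area}(\mathbb{H}/\mathscr{G}_{k,k})\ge\mathrm{area}(\mathbb{H}/\Gamma(k))=\tfrac{\pi}{6}\,k^{3}\prod_{p\mid k}(1-p^{-2})\ge 4\pi$ (the minimum $24$ for $[SL_2(\mathbb{Z}):\Gamma(k)]$ is attained at $k=3$). A finite-index $G_{k,k}$ would then be a torsion-free lattice whose quotient, being orientable with free fundamental group of rank~$2$, must be a once-punctured torus or thrice-punctured sphere of area exactly $2\pi$ by Gauss--Bonnet, a contradiction. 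Your $SL_2$/$PSL_2$ bookkeeping is handled correctly: $-I\notin\mathscr{G}_{k,k}$ since $-1\not\equiv 1\pmod{k^2}$ for $k\ge 2$, so both groups embed in $PSL_2(\mathbb{R})$. The ping-pong alternative is more elementary and avoids Gauss--Bonnet: for $k\ge 3$ the ping-pong intervals on $\partial\mathbb{H}$ leave a gap, so the limit set of $G_{k,k}$ is a proper closed subset of $\partial\mathbb{H}$; hence $G_{k,k}$ is a Fuchsian group of the second kind, has infinite covolume, and cannot sit with finite index inside any lattice.
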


Given a rational number $q$, if there exist integers $q_0,q_1,\dots,q_r$ (referred to as partial quotients) such that $$q = q_0+\cfrac{1}{q_1+\cfrac{1}{q_2+\ddots+\cfrac{1}{q_r }}},$$ then we refer to such an identity as a continued fraction representation of $q$ and denote it by $[q_0,q_1,\dots,q_r].$ Every rational number has multiple continued fraction representations, however each has a unique representation that satisfies the additional requirements that $q_i\geq 1$ when $0<i< r$, and $q_r>1$ when $r>0.$ We refer to this representation as \emph{the} short continued fraction representation of $q.$

With the above definitions, we can now state equivalent forms of some results of Esbelin and Gutan~\cite{EG2,EG1} on the membership problem for $S_{k,k}$ and $G_{k,k}$ for $k\geq 2.$

\begin{theorem}[Esbelin and Gutan~\cite{EG2}]\label{egMon}
Suppose that $M=\begin{bmatrix}a & b \\c & d\end{bmatrix}\in\mathscr{S}_{k,k}$ for some $k\geq 2$. Then $M\in S_{k,k}$ if and only if at least one of the rationals  $c/a$ and $b/d$ has a continued fraction expansion having all partial quotients in $k\mathbb{N}.$
\end{theorem}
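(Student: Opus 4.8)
The plan is to prove both directions by relating products of the generators $L_k, R_k$ to continued fractions, using the freeness of $S_{k,k}$ for $k \geq 2$.

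For the forward direction, suppose $M = \begin{bmatrix} a & b \\ c & d \end{bmatrix} \in S_{k,k}$. I would write $M$ as a word in $L_k$ and $R_k$. The key computational observation is that $R_k^{m} = \begin{bmatrix} 1 & km \\ 0 & 1 \end{bmatrix}$ and $L_k^{m} = \begin{bmatrix} 1 & 0 \\ km & 1 \end{bmatrix}$, and a product $R_k^{m_0} L_k^{m_1} R_k^{m_2} \cdots$ produces a matrix whose entries are governed by the continuant of $[km_0, km_1, km_2, \dots]$. Concretely, if $M$ starts with a power of $R_k$, say $M = R_k^{m_0} L_k^{m_1} \cdots$, then $b/d$ (the ratio of the top-right to bottom-right entries) is the rational with short continued fraction essentially $[km_0, km_1, \dots]$ — one has to be slightly careful with parity of the word length and with which corner ratio ($b/d$ versus $c/a$) is the "clean" one depending on whether the word starts with an $L$ or an $R$, which is exactly why the statement is a disjunction over $c/a$ and $b/d$. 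So I would do the bookkeeping: track, via induction on word length, that multiplying on the right by $L_k^m$ or $R_k^m$ prepends a partial quotient $km$ to one of the two ratios while the other ratio's continued fraction data is preserved in the appropriate sense. This yields that one of $c/a$, $b/d$ has all partial quotients in $k\mathbb{N}$.

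For the converse, suppose $M \in \mathscr{S}_{k,k}$ and (say) $b/d = [km_0, km_1, \dots, km_r]$ with all $m_i \geq 1$ (the case of $c/a$ is symmetric, via transposing, since $\mathscr{S}_{k,k}$ and $S_{k,k}$ are both stable under transpose). I would reconstruct a candidate word $W = R_k^{m_0} L_k^{m_1} \cdots \in S_{k,k}$ from this continued fraction so that $W$ has the same right column $\begin{bmatrix} b \\ d \end{bmatrix}$ as $M$, using that the right column of such a product is determined (up to the known normalization) by the continuant. Then $W^{-1} M$ fixes the second column, so $W^{-1} M = \begin{bmatrix} 1 & 0 \\ t & 1 \end{bmatrix}$ for some integer $t$; membership of $M$ in $\mathscr{S}_{k,k}$ together with $W \in S_{k,k} \subseteq \mathscr{S}_{k,k}$ and Proposition~\ref{GuvIN} forces $t \in k\mathbb{N}$ (one needs the sign of $t$ to be nonnegative, which should follow from positivity of all the entries involved — $M \in SL_2(\mathbb{N})$), hence $W^{-1}M = L_k^{t/k} \in S_{k,k}$ and therefore $M \in S_{k,k}$.

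The main obstacle I anticipate is the case analysis and normalization in both directions: the correspondence between words in $\{L_k, R_k\}$ and short continued fractions is not completely uniform — it depends on whether the word begins and ends with $L$ or $R$, on the parity of the number of syllables, and there are edge cases when $M$ is (a power of) a single generator or when $d = 0$ or $a = 0$, where $b/d$ or $c/a$ is not a well-defined finite rational. Handling the degenerate columns and making sure the "short" continued fraction convention (namely $q_i \geq 1$ for $0 < i < r$ and $q_r > 1$) lines up with the constraint "all partial quotients in $k\mathbb{N}$" (note $k\mathbb{N}$ entries are automatically $\geq k \geq 2 > 1$, which is convenient) will require care. A secondary subtlety is ensuring in the converse that the reconstructed $W$ can be chosen with the correct (nonnegative) column rather than its negative; this is where $M \in SL_2(\mathbb{N})$ and the explicit form of $\mathscr{S}_{k,k}$ are essential.
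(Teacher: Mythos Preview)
The paper does not itself prove this theorem --- it is quoted as a result of Esbelin and Gutan from~\cite{EG2} --- but your plan coincides with the method the paper uses for its generalization, Theorem~\ref{sanovlike} (argued via Theorem~\ref{sanovlikeGp} and Proposition~\ref{oneFracMon}): for the forward direction one writes $M=R_k^{\alpha_0}L_k^{\alpha_1}\cdots L_k^{\alpha_r}$ (allowing $\alpha_0,\alpha_r=0$) and repeatedly applies Lemma~\ref{cf} to obtain $b/d=[k\alpha_0,k\alpha_1,\dots,k\alpha_{r-1}]$; for the converse one strips generators from the left (your $W^{-1}M$) to reach $N\in\mathscr{G}_{k,k}$ with second column $\begin{bmatrix}0\\1\end{bmatrix}$, forcing $N=L_k^{\alpha}$, with $\alpha\geq 0$ following from positivity of the first column of $M\in\mathscr{S}_{k,k}$. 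Your proposal is correct and matches the paper's approach; note only that in the paper's setup $b/d$ in fact always carries the desired continued fraction (the disjunction with $c/a$ is not needed once one allows a leading partial quotient equal to $0$), and the edge cases you flag ($a=0$ or $d=0$) cannot occur since $a\equiv d\equiv 1\pmod{k^2}$ for $M\in\mathscr{S}_{k,k}$.
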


\begin{theorem}[Esbelin and Gutan~\cite{EG1}]\label{egGp}
Suppose that $M=\begin{bmatrix}a & b \\c & d\end{bmatrix}\in\mathscr{G}_{k,k}$ for some $k\geq 2$. Then $M\in G_{k,k}$ if and only if at least one of the rationals $c/a$ and $b/d$ has a continued fraction expansion having all partial quotients in $k\mathbb{Z}.$
\end{theorem}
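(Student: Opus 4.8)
The plan is to prove the two implications separately. A convenient first reduction: let $T=\begin{bmatrix}0&1\\1&0\end{bmatrix}$, so that $TL_kT=R_k$ and $TR_kT=L_k$; hence conjugation by $T$ preserves both $G_{k,k}$ and, when $u=v=k$, the group $\mathscr{G}_{k,k}$ of Proposition~\ref{GuvIN}, and since $TMT=\begin{bmatrix}d&c\\b&a\end{bmatrix}$ the ratio playing the role of $b/d$ for $TMT$ is $c/a$. So it suffices to treat the ratio $b/d$ throughout, the $c/a$ case following by applying the result to $TMT$. The engine behind everything is the dictionary $\begin{bmatrix}1&km\\0&1\end{bmatrix}\begin{bmatrix}a&b\\c&d\end{bmatrix}=\begin{bmatrix}a+kmc&b+kmd\\c&d\end{bmatrix}$ and $\begin{bmatrix}1&0\\km&1\end{bmatrix}\begin{bmatrix}a&b\\c&d\end{bmatrix}=\begin{bmatrix}a&b\\kma+c&kmb+d\end{bmatrix}$, which say that left multiplication by $R_k^{\pm m}$ shifts $b/d$ by $\pm km$, while left multiplication by $L_k^{\pm m}$ sends $b/d$ to $1/(\pm km+d/b)$ (when $b\neq0$; when $b=0$ the ratio is $0$). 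I will also use constantly that for $k\geq2$ every $M\in\mathscr{G}_{k,k}$ has $a\equiv d\equiv1\pmod{k^2}$ (so $a,d\neq0$) and $b\equiv c\equiv0\pmod k$.

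For the "only if" direction, suppose $M\in G_{k,k}$ and write it as a word $g_1g_2\cdots g_n$ in $L_k^{\pm1},R_k^{\pm1}$; I induct on $n$. In the base cases $M$ is the identity or a single power $R_k^m$ or $L_k^m$, so $b/d\in\{0,km\}$, with expansions $[0]$ or $[km]$. For the inductive step put $M=g_1M'$ with $M'\in G_{k,k}$ a shorter word; by the dictionary $b/d$ equals $b'/d'\pm km$ if $g_1=R_k^{\pm m}$, and $1/(\pm km+d'/b')$ if $g_1=L_k^{\pm m}$ and $b'\neq0$ (and $b/d=0$ if $b'=0$). Since $M,M'\in\mathscr{G}_{k,k}$, all denominators occurring are nonzero and these formulas make sense. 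The set of rationals admitting a continued fraction expansion with all partial quotients in $k\mathbb{Z}$ is visibly closed under $x\mapsto x\pm km$ (adjust the leading quotient) and under $x\mapsto1/x$ (prepend or delete a leading $0\in k\mathbb{Z}$), so $b/d$ lies in this set if and only if $b'/d'$ does, and the induction closes. In particular this already shows that $M\in G_{k,k}$ forces $b/d$ alone to have such an expansion.

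For the "if" direction, suppose $M\in\mathscr{G}_{k,k}$ and $b/d=[km_0,km_1,\dots,km_r]$ with $m_i\in\mathbb{Z}$. The first step is a parity lemma: $r$ is even. Letting $p_i/q_i$ be the convergents, the recurrence $q_i=(km_i)q_{i-1}+q_{i-2}$ with $q_{-1}=0$, $q_0=1$ gives $q_r\equiv1\pmod k$ when $r$ is even and $q_r\equiv0\pmod k$ when $r$ is odd; since $\gcd(b,d)=\gcd(p_r,q_r)=1$ we get $d=\pm q_r$, and $d\equiv1\pmod k$ forces $r$ even (here $k\geq2$). Now set $N:=R_k^{m_0}L_k^{m_1}R_k^{m_2}\cdots L_k^{m_{r-1}}R_k^{m_r}\in G_{k,k}$; unwinding $\begin{bmatrix}a&1\\1&0\end{bmatrix}=\begin{bmatrix}1&a\\0&1\end{bmatrix}T$ shows $N=\begin{bmatrix}p_{r-1}&p_r\\q_{r-1}&q_r\end{bmatrix}$, so $N$ has second column $(p_r,q_r)^{\mathsf{T}}$. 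Since $d$ and $q_r$ are both $\equiv1\pmod{k^2}$ (as $M,N\in\mathscr{G}_{k,k}$), the equal coprime pairs $(b,d)$ and $(p_r,q_r)$ agree on the nose, so $M$ and $N$ have the same second column. Hence $N^{-1}M$ has second column $(0,1)^{\mathsf{T}}$; lying in $SL_2$ it equals $\begin{bmatrix}1&0\\ \ell&1\end{bmatrix}$, and lying in the group $\mathscr{G}_{k,k}$ forces $\ell\in k\mathbb{Z}$, so $N^{-1}M=L_k^{\ell/k}\in G_{k,k}$ and therefore $M=NL_k^{\ell/k}\in G_{k,k}$.

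The genuinely routine parts are the two closure properties of the "$k\mathbb{Z}$-partial-quotient" class used in the "only if" step and the bookkeeping identifying the second column of $N$ with the convergent $(p_r,q_r)$. The step I expect to be the crux is the parity lemma in the "if" direction: without it, the word read off from the continued fraction closes up only to $N$ times the stray factor $T\notin\mathscr{G}_{k,k}$ and the argument stalls, and it is precisely the congruence $d\equiv1\pmod{k^2}$ — available only because $k\geq2$ — that both forces $r$ to be even and pins down the signs needed for $N^{-1}M$ to be lower unitriangular.
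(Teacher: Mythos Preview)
The paper does not give its own proof of Theorem~\ref{egGp}: it is quoted from~\cite{EG1} as background, and the paper's own contributions are the refinements in Proposition~\ref{oneFracMon}, Proposition~\ref{oneFracGroup}, and Theorems~\ref{sanovlike}--\ref{sanovlikeGp}. So there is no in-paper proof to compare against directly. That said, your argument is correct and self-contained, and it is worth comparing it to the techniques the paper deploys for its neighbouring results.

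Your ``if'' direction is essentially the same mechanism as the $(\Leftarrow)$ part of Proposition~\ref{oneFracMon} (which the paper also invokes for Theorem~\ref{sanovlikeGp}): the paper strips the continued fraction one partial quotient at a time via Lemma~\ref{cf}, obtains the parity obstruction by landing in a matrix of $\mathscr{G}_{u,v}$ with second-column ratio $1/(u\alpha_r)$, and then reads off $N^{-1}M=L_u^{\alpha}$. You package the same computation globally using the convergent identity $\prod_i\begin{bmatrix} km_i & 1\\ 1 & 0\end{bmatrix}=\begin{bmatrix} p_r & p_{r-1}\\ q_r & q_{r-1}\end{bmatrix}$ and the factorisation $\begin{bmatrix} a & 1\\ 1 & 0\end{bmatrix}=R_k^{a/k}T$, getting the parity from $q_r\bmod k$ and the sign from $d\equiv q_r\equiv 1\pmod{k^2}$. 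The two arguments are equivalent; yours is slightly slicker because it avoids iterating Lemma~\ref{cf}.

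For ``only if'', the paper's analogue (in Theorem~\ref{sanovlikeGp}) actually proves more: it identifies the canonical element $(f\circ C)(b/d)\in A_2$ via Lemma~\ref{scf} and Proposition~\ref{fg}. Your closure-under-$x\mapsto x+km$ and $x\mapsto 1/x$ argument is the more elementary route that produces \emph{some} $k\mathbb{Z}$-expansion without singling one out, which is exactly what Theorem~\ref{egGp} requires. The reduction to $b/d$ by conjugation with $T$ is also legitimate precisely because $u=v=k$ here; this symmetry is not available in the paper's $S_{u,v}$/$G_{u,v}$ setting, which is why Propositions~\ref{oneFracMon} and~\ref{oneFracGroup} have to argue the $c/a$ side separately.
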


In this paper we generalize Theorems~\ref{egMon} and~\ref{egGp} and give a simple proof of Theorem~\ref{sanov}. In particular, we give a closely related characterization of matrices in $G_{u,v}$, when $u,v\geq 3$, in terms of short continued fraction representations. It follows from Theorem~\ref{egGp} that membership in $G_{k,k}$, when $k\geq 2$, is equivalent to determining if certain rational numbers have a continued fraction representation with partial quotients satisfying specific criteria. Esbelin and Gutan developed an algorithm in~\cite{EG1} designed to search for the desired continued fraction directly using the Division Algorithm. We show that we can begin with the short continued fraction representation (which is straightforward to compute and also uses the Division Algorithm) of {\emph{one}} of these rational numbers (by Proposition~\ref{oneFracGroup}) and apply a simple function whose output completely determines whether or not the continued fraction representations needed in Theorem~\ref{egGp} exist. That is, the desired continued fraction representation in Theorem~\ref{egGp} (if it exists) is still encoded in the short continued fraction representation, albeit in a way that is not immediately obvious. Our motivation for using short continued fractions to study the members of $G_{u,v}$ comes from the success of this approach when applied to the members of $S_{u,v}$. 

The paper is organized as follows. In Section 2 we go over some notation, definitions, and results that will allow us to recast the membership problem in terms of the properties of certain sets of vectors. Section 3 contains our main results extending Theorems~\ref{egMon} and~\ref{egGp} in the following two ways. We generalize Esbelin and Gutan's results for the case $u=v$, and we remove the need to state our results in terms of properties of $c/a$ or $b/d$ in favor of just one. In the last section, Section 4, we give an example that illustrates how we solve the membership problem in the group case based on Theorem~\ref{sanovlikeGp}.



\section{Some necessary definitions and results}

In this section we will define several operations and functions on a set of vectors. These vectors will act as stand-ins for continued fraction representations. This is done to avoid any issues regarding well-definedness.

Let $A=\bigcup_{r=0}^\infty (\mathbb{Z}\times\mathbb{Z}_{\neq 0}^r)$. We denote an element of $A$ by $\llbracket q_0,q_1,q_2,\dots,q_r\rrbracket.$ Let $$-\llbracket q_0,q_1,q_2,\dots,q_r\rrbracket :=\llbracket -q_0,-q_1,-q_2,\dots,-q_r\rrbracket.$$ For any nonnegative integers $m$ and $n$, let $$\llbracket  q_0,q_1,q_2,\dots,q_m\rrbracket\oplus\llbracket  p_0,p_1,p_2\dots,p_n\rrbracket := \begin{cases}
\llbracket q_0,q_1,q_2,\dots,q_m,p_0,p_1,p_2\dots,p_n\rrbracket & \text{ if }p_0\neq 0,\\
\llbracket q_0,q_1,q_2,\dots,q_m+p_1,p_2\dots,p_n\rrbracket &\text{ otherwise.}
\end{cases}$$ 
Let 
\begin{align*}
    A_0 &= \{\llbracket q_0,q_1,q_2,\dots,q_r\rrbracket\in A: [q_i,\dots,q_r]\neq 0\text{ when } 0<i<r\},\\
    A_1 &=\{\llbracket q_0,q_1,q_2,\dots,q_r\rrbracket\in A_0:q_i\geq 1\text{ when } 0<i<r,\text{ and } q_r>1\text{ when } r>0\}, \text{ and}\\
    A_2 &=\{\llbracket q_0,q_1,q_2,\dots,q_r\rrbracket\in A_0:|q_i|> 1 \text{ when } 0<i\leq r\}.
\end{align*} 
Define the function $C:\mathbb{Q}\to A_1$ by 
$$C(x)=\llbracket x_0,x_1,x_2,\dots,x_r\rrbracket$$ if $[x_0,x_1,x_2,\dots,x_r]$ is the short continued fraction representation of $x$. Note that by the uniqueness of short continued fraction representations, $C(\mathbb{Q})=A_1$. Let $E:A_0\to\mathbb{Q}$ be given by $$E(\llbracket q_0,q_1,q_2,\dots,q_r\rrbracket) = q_0+\cfrac{1}{q_1+\cfrac{1}{q_2+\ddots+\cfrac{1}{q_r }}}.$$ Note that while $(E\circ C)(x)=x$, $E$ is not a one-to-one function, so these two functions are not inverses of each other.

\begin{lemma}\label{tran}
Suppose that $\llbracket q_0,q_1,q_2,\dots,q_r\rrbracket\in A_1$ is such that $q_j=1$ for some $0<j<r$. Then
\begin{align*}
    E(\llbracket q_0,q_1,q_2,\dots,q_{j-1},1,q_{j+1},\dots,q_r\rrbracket) &= E(\llbracket q_0,q_1,q_2,\dots,q_{j-1}+1\rrbracket\oplus-\llbracket q_{j+1}+1,\dots,q_r\rrbracket).
\end{align*}
\end{lemma}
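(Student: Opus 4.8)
The plan is to reduce the claimed identity to a standard continued fraction manipulation — namely the rule that a partial quotient equal to $1$ can be "absorbed" — and then carefully check that the bookkeeping done by the operators $\oplus$ and $-$ matches that rule. Concretely, I would start from the elementary identity, valid whenever the denominators make sense,
\[
a + \cfrac{1}{1 + \cfrac{1}{y}} = a + \cfrac{y}{y+1} = (a+1) - \cfrac{1}{y+1},
\]
and apply it with $a = q_{j-1}$ (after first collapsing $q_0,\dots,q_{j-1}$ into the "head" of the fraction) and $y = [q_{j+1},q_{j+2},\dots,q_r]$, which is a well-defined nonzero rational since $\llbracket q_0,\dots,q_r\rrbracket\in A_1$ forces $[q_i,\dots,q_r]\neq 0$ for $0<i<r$ and also $q_r>1>0$. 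Writing $y = E(\llbracket q_{j+1},\dots,q_r\rrbracket)$, the right-hand side becomes $(a+1) - 1/(y+1)$, and $-1/(y+1)$ is exactly $E(-\llbracket q_{j+1}+1,\dots,q_r\rrbracket)$ because negating every entry of a vector negates the value of $E$ and incrementing the leading entry by $1$ adds $1$ to the value.

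Next I would assemble these pieces into the language of the lemma. On the left side, $E(\llbracket q_0,\dots,q_{j-1},1,q_{j+1},\dots,q_r\rrbracket)$ unfolds to $q_0 + \cfrac{1}{q_1 + \ddots + \cfrac{1}{q_{j-1} + \cfrac{1}{1 + \cfrac{1}{y}}}}$, and substituting the elementary identity above turns the innermost block $q_{j-1} + \cfrac{1}{1 + 1/y}$ into $(q_{j-1}+1) - 1/(y+1)$. On the right side, $\llbracket q_0,\dots,q_{j-1}+1\rrbracket \oplus -\llbracket q_{j+1}+1,\dots,q_r\rrbracket$ is, by the definition of $\oplus$, either the concatenation $\llbracket q_0,\dots,q_{j-1}+1,-(q_{j+1}+1),-q_{j+2},\dots,-q_r\rrbracket$ (when $q_{j+1}+1\neq 0$) or, when $q_{j+1} = -1$, the vector with the overlap term added, $\llbracket q_0,\dots,q_{j-1}+1-q_{j+2},-q_{j+3},\dots\rrbracket$. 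In either case I need the fact that $E$ respects $\oplus$ in the sense that $E(\mathbf{u}\oplus\mathbf{v}) = E(\mathbf{u}\,{}^\frown\mathbf{v})$ whenever the naive concatenation lies in $A_0$ — this is precisely why $\oplus$ was defined with the case split, since $E$ of a vector with an interior $0$ is undefined, and the $\oplus$-rule $q_m + p_1$ collapses the offending $p_0 = 0$ exactly as a continued fraction would. So on the right-hand side $E$ evaluates to $q_0 + \cfrac{1}{q_1 + \ddots + \cfrac{1}{(q_{j-1}+1) + E(-\llbracket q_{j+1}+1,\dots,q_r\rrbracket)}}$, and $E(-\llbracket q_{j+1}+1,\dots,q_r\rrbracket) = -1/(y+1)$ by the observation at the end of the previous paragraph. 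The two sides now agree term by term.

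I would structure the write-up in three short lemmas-within-the-proof (or inline observations): (i) $E(-\mathbf{w}) = -E(\mathbf{w})$; (ii) $E(\llbracket q_0+1,q_1,\dots\rrbracket) = E(\llbracket q_0,q_1,\dots\rrbracket) + 1$ and, more to the point, that $\oplus$ is compatible with $E$; and (iii) the scalar identity $a + 1/(1+1/y) = (a+1) - 1/(y+1)$. Then the lemma is just "peel off the common head $q_0,\dots,q_{j-1}$, apply (iii), reassemble using (i) and (ii)."

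The main obstacle, and the only place that needs genuine care rather than routine algebra, is the degenerate case $q_{j+1} = -1$ (more generally, making sure every continued fraction I write down actually has nonzero denominators at each level, so that $E$ is defined on all the intermediate vectors and the scalar identity (iii) is legitimate). Since $\llbracket q_0,\dots,q_r\rrbracket \in A_1$, its entries after position $0$ are positive, so $q_{j+1}\geq 1$ and the degenerate branch of $\oplus$ never actually triggers here — but I should say this explicitly, because it is what lets me use the clean concatenation form of the right-hand side and avoid worrying about whether the resulting vector still lies in the domain $A_0$ of $E$. A secondary subtlety is confirming $y = [q_{j+1},\dots,q_r]\neq 0$ and $y+1\neq 0$ (equivalently $[q_{j+1},\dots,q_r]\neq -1$): the first follows from the $A_0$ condition built into $A_1$, and since the tail entries are positive we in fact have $y\geq 1$, so $y+1\geq 2\neq 0$, and both denominators appearing in (iii) are safely nonzero.
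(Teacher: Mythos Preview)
Your approach is essentially identical to the paper's: both reduce the lemma to the scalar identity $\alpha + \dfrac{1}{1+1/\beta} = (\alpha+1) - \dfrac{1}{\beta+1}$ applied at level $j$, and you are in fact more careful than the paper about verifying that $y=[q_{j+1},\dots,q_r]>0$ and that $q_{j+1}+1\neq 0$ so that $\oplus$ is plain concatenation. One small slip to correct in your write-up: $E(-\llbracket q_{j+1}+1,\dots,q_r\rrbracket)=-(y+1)$, not $-1/(y+1)$, and correspondingly the $\oplus$-expansion (this is the $p_0\neq 0$ branch of the paper's Lemma~\ref{Eid}) places $1/E(\cdot)$ rather than $E(\cdot)$ after $q_{j-1}+1$; the two errors cancel, so your conclusion is unaffected.
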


\begin{proof}
The proof of the lemma makes use of the following identity:

\begin{align}\label{cfid}
\alpha+\frac{1}{1+\cfrac{1}{\beta}} = \alpha+1-\cfrac{1}{\beta+1}
\end{align}
for any $\alpha,\beta\in\mathbb{R}$ with $\beta\neq -1, 0.$ More specifically,

\begin{align*}
E(\llbracket q_0,q_1,q_2,\dots,q_{j-1},1,q_{j+1},\dots,q_r\rrbracket) &= q_0+\cfrac{1}{q_1+\cfrac{1}{q_2+\ddots+\cfrac{1}{q_{j-1}+\cfrac{1}{1+\cfrac{1}{q_{j+1}+\ddots+\cfrac{1}{q_r}}}}}}\\
&= q_0+\cfrac{1}{q_1+\cfrac{1}{q_2+\ddots+\cfrac{1}{q_{j-1}+1-\cfrac{1}{q_{j+1}+1+\ddots+\cfrac{1}{q_r}}}}}\\
&= E(\llbracket q_0,q_1,q_2,\dots,q_{j-1}+1,-(q_{j+1}+1),\dots,-q_r\rrbracket)\\
&= E(\llbracket q_0,q_1,q_2,\dots,q_{j-1}+1\rrbracket\oplus-\llbracket q_{j+1}+1,\dots,q_r\rrbracket).
\end{align*}

\end{proof}

\begin{lemma}\label{tran2}
Suppose that $\llbracket q_0,q_1,q_2,\dots,q_r\rrbracket\in A_2$ is such that $q_j<0$ for some $0<j<r$. Then
\begin{align*}
    E(\llbracket q_0,q_1,q_2,\dots,q_r\rrbracket) &= E(\llbracket q_0,q_1,q_2,\dots,q_{j-1}-1,1\rrbracket\oplus-\llbracket q_j+1,\dots,q_r\rrbracket).
\end{align*}
\end{lemma}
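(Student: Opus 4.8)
The plan is to follow the proof of Lemma~\ref{tran} almost verbatim, replacing the single use of identity~(\ref{cfid}) by a rearranged version of it that, instead of collapsing a partial quotient equal to $1$, splits off the pair $q_{j-1}-1,\,1$ in place of the negative partial quotient $q_j$. Write $t=E(\llbracket q_{j+1},\dots,q_r\rrbracket)$ for the value of the tail past position $j$ and set $w=q_j+1/t=E(\llbracket q_j,q_{j+1},\dots,q_r\rrbracket)$. First I would record the elementary negation identity $E(\llbracket -p_0,\dots,-p_s\rrbracket)=-E(\llbracket p_0,\dots,p_s\rrbracket)$; this shows that the innermost block on the right-hand side of the lemma, namely $-(q_j+1)+\tfrac{1}{-q_{j+1}+\ddots+\frac{1}{-q_r}}$, equals $-(q_j+1)-1/t=-(w+1)$.

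The algebraic heart is the identity $\alpha+\frac1w=(\alpha-1)+\dfrac{1}{1+\frac{1}{-(w+1)}}$, valid whenever $w\neq 0,-1$, which is exactly~(\ref{cfid}) with $\beta=-(w+1)$ after shifting $\alpha$ by $1$. Applying it with $\alpha=q_{j-1}$ rewrites the block $q_{j-1}+\frac{1}{q_j+1/t}$ sitting inside $E(\llbracket q_0,\dots,q_r\rrbracket)$ as $(q_{j-1}-1)+\dfrac{1}{1+\frac{1}{-(w+1)}}$. Substituting this back into the continued fraction for $E(\llbracket q_0,\dots,q_r\rrbracket)$ and reading off the resulting vector gives $E(\llbracket q_0,\dots,q_{j-1}-1,1,-(q_j+1),-q_{j+1},\dots,-q_r\rrbracket)$; since $q_j+1\neq 0$ no merging occurs when forming the concatenation, so this equals $E(\llbracket q_0,\dots,q_{j-1}-1,1\rrbracket\oplus-\llbracket q_j+1,\dots,q_r\rrbracket)$, which is the assertion.

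The one step that genuinely uses the hypothesis $\llbracket q_0,\dots,q_r\rrbracket\in A_2$ --- and the only place I expect real work --- is verifying that the identity above applies, i.e.\ that $w\notin\{0,-1\}$, and more generally that every denominator appearing in the rewritten continued fraction is nonzero so that $E$ is defined there. For this I would show, by backward induction on $m$, that the condition $|q_i|\geq 2$ for $0<i\leq r$ forces $|E(\llbracket q_m,\dots,q_r\rrbracket)|>1$ for every $m$ with $0<m\leq r$: the base case is $|q_r|\geq 2$, and the inductive step is $|q_m+1/E(\llbracket q_{m+1},\dots,q_r\rrbracket)|\geq |q_m|-1/|E(\llbracket q_{m+1},\dots,q_r\rrbracket)|>2-1=1$. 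Taking $m=j$ gives $|w|>1$, hence $w\neq 0,-1$; the $A_0$-condition $[q_i,\dots,q_r]\neq 0$ for $0<i<r$, together with $w\neq 0$, handles the remaining denominators just as in Lemma~\ref{tran}. With that settled, the chain of equalities is a one-line application of~(\ref{cfid}), exactly paralleling Lemma~\ref{tran}.
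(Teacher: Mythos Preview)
Your proof is correct and follows essentially the same approach as the paper's: the paper's proof simply states that one uses identity~(\ref{cfid}) ``from right to left similarly to the proof of Lemma~\ref{tran},'' which is exactly your substitution $\beta=-(w+1)$, $\alpha\to\alpha-1$. Your additional care in verifying $w\notin\{0,-1\}$ via the backward-induction bound $|E(\llbracket q_m,\dots,q_r\rrbracket)|>1$ is more thorough than what the paper writes out, but is certainly in the intended spirit.
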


\begin{proof}
The proof of the lemma follows from using~\eqref{cfid} from right to left similarly to the proof of Lemma~\ref{tran}.
\end{proof}

Lemmas~\ref{tran} and~\ref{tran2} show that we can manipulate elements in $A_1$ and $A_2$ in ways that do not disrupt the rational number that they represent. These manipulations are critically dependent on~\eqref{cfid} and are the key to our main results. Since Lemmas~\ref{tran} and~\ref{tran2} can be applied multiple times, we are naturally led to the following function definitions.

Define a function $f:A_1\to A_2$ recursively by
\begin{center}
\begin{align*}
&f(\llbracket q_0,q_1,q_2,\dots,q_r\rrbracket) \\
=&\begin{cases}
\llbracket q_0,q_1,q_2,\dots,q_r\rrbracket  &\text{ if $r=0$ or $q_i\neq 1$ for $0<i< r$,}\\
\llbracket q_0,q_1,q_2,\dots,q_{j-1}+1\rrbracket\oplus-f(\llbracket q_{j+1}+1,q_{j+2},\dots,q_r\rrbracket) &\text{ if $q_j=1$ and $q_i\neq 1$ for $0< i < j$,}
\end{cases}
\end{align*}
\end{center}

\noindent and a function $g:A_2\to A_1$ recursively by
\begin{center}
\begin{eqnarray*}
&&g(\llbracket q_0,q_1,q_2,\dots,q_r\rrbracket)\\
&=&\begin{cases}
\llbracket q_0,q_1,q_2,\dots,q_r\rrbracket &\text{ if $r=0$ or $q_i>0$ for $0<i\leq r$,}\\
\llbracket q_0,q_1,q_2,\dots,q_{j-1}-1,1\rrbracket\oplus g(-\llbracket q_j+1,q_{j+1},\dots,q_r\rrbracket)& \text{ if $q_j<0$ and $q_i>0$ for $0<i<j$.}
\end{cases}
\end{eqnarray*}
\end{center}

The next lemma is straightforward and will be used to show how $E$ and $f$ (as well as $E$ and $g$) interact.


\begin{lemma}\label{Eid}
For $a,b\in A_0$ with $a=\llbracket q_0,q_1,q_2,\dots,q_r\rrbracket$ and $b=\llbracket p_0,p_1,p_2,\dots,p_s\rrbracket$ , 
$$E(a\oplus b) = 
\begin{cases}
q_0+\cfrac{1}{q_1+\cfrac{1}{q_2+\ddots+\cfrac{1}{q_r+\cfrac{1}{E(b)}}}} &\text{ if } p_0\neq 0,\\
q_0+\cfrac{1}{q_1+\cfrac{1}{q_2+\ddots+\cfrac{1}{q_r+E(b)}}} &\text{ if } p_0=0.
\end{cases}
$$
\end{lemma}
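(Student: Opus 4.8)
The plan is to prove Lemma~\ref{Eid} directly from the definition of $\oplus$, splitting into the two cases in that definition, and in each case unfolding $E$ on the concatenated vector by repeatedly peeling off leading partial quotients. The essential observation is that $E$ on any vector in $A_0$ of length at least one can be written recursively as $E(\llbracket q_0,q_1,\dots,q_r\rrbracket) = q_0 + 1/E(\llbracket q_1,\dots,q_r\rrbracket)$ whenever the tail $\llbracket q_1,\dots,q_r\rrbracket$ still lies in $A_0$ and represents a nonzero rational; since $a\in A_0$, peeling off $q_0,q_1,\dots,q_{r-1}$ in turn is legitimate and produces exactly the nested fraction with $E(\llbracket q_r,\dots\rrbracket)$ sitting at the bottom. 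So the proof is really just: (i) record this one-step unfolding identity for $E$, and (ii) induct on $r$, the length of the prefix $a$.

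First I would handle the base case $r=0$, where $a = \llbracket q_0\rrbracket$. Here $a\oplus b$ is $\llbracket q_0,p_0,p_1,\dots,p_s\rrbracket$ if $p_0\neq 0$ and $\llbracket q_0+p_1,p_2,\dots,p_s\rrbracket$ if $p_0 = 0$. In the first subcase, $E(a\oplus b) = q_0 + 1/E(\llbracket p_0,\dots,p_s\rrbracket) = q_0 + 1/E(b)$, matching the claimed formula. In the second subcase, $p_0 = 0$ forces $s\geq 1$ (otherwise $b\notin A_0$, since $\llbracket 0\rrbracket$ would represent $0$; actually $\llbracket 0 \rrbracket \in A_0$ but then $E(b)=0$ and the statement's right-hand side $q_0 + 0/\text{(nothing)}$ degenerates — I should double-check the convention, but in the intended application $b$ will come from $-f(\cdots)$ or $g(-(\cdots))$ and the $p_0=0$ branch of $\oplus$ is only invoked when $s\geq 1$), and $E(\llbracket q_0+p_1,p_2,\dots,p_s\rrbracket) = (q_0+p_1) + 1/E(\llbracket p_2,\dots,p_s\rrbracket) = q_0 + (p_1 + 1/E(\llbracket p_2,\dots,p_s\rrbracket)) = q_0 + E(b)$, again as claimed.

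For the inductive step, suppose the result holds for prefixes of length $r-1$. Given $a = \llbracket q_0,q_1,\dots,q_r\rrbracket$, write $a = \llbracket q_0\rrbracket \oplus a'$ with $a' = \llbracket q_1,\dots,q_r\rrbracket$; note $a'\in A_0$ because $a\in A_0$, and the definition of $\oplus$ gives $a\oplus b = \llbracket q_0\rrbracket \oplus (a'\oplus b)$ since $q_1\neq 0$ (as $r\geq 1$, $q_1$ is a genuine entry and all entries past the first are nonzero, being drawn from $\mathbb{Z}_{\neq 0}$). Then $E(a\oplus b) = q_0 + 1/E(a'\oplus b)$ by the one-step unfolding, and applying the inductive hypothesis to $E(a'\oplus b)$ plugs the appropriate nested fraction (ending in $1/E(b)$ or $E(b)$ according to whether $p_0\neq 0$) into the denominator, yielding precisely the depth-$(r+1)$ nested fraction in the statement.

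The main obstacle is not conceptual but bookkeeping: I need to be careful that every intermediate tail $\llbracket q_i,\dots,q_r\rrbracket$ (and $a'\oplus b$) genuinely lies in $A_0$ so that $E$ is defined on it and the recursion $E(\llbracket q_0,\text{tail}\rrbracket) = q_0 + 1/E(\text{tail})$ is valid — in particular that $E(\text{tail})\neq 0$, which is exactly what membership in $A_0$ guarantees for the relevant indices. I would state the one-step unfolding of $E$ as a small displayed remark (or fold it silently into the computation), verify the $A_0$-membership of the tails once, and then the two case computations above are short. I expect the whole proof to run to well under a page.
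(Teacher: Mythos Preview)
The paper offers no proof of this lemma; it is simply declared ``straightforward.'' Your inductive unwinding via the one-step identity $E(\llbracket q_0,\dots,q_r\rrbracket)=q_0+1/E(\llbracket q_1,\dots,q_r\rrbracket)$ is exactly the natural way to make it precise, and for the case $p_0\neq 0$ your argument is correct and complete.

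There is, however, a concrete error in your $p_0=0$ base case. With $b=\llbracket 0,p_1,\dots,p_s\rrbracket$ you assert $E(b)=p_1+1/E(\llbracket p_2,\dots,p_s\rrbracket)$, but by definition
\[
E(b)=0+\cfrac{1}{p_1+\cfrac{1}{E(\llbracket p_2,\dots,p_s\rrbracket)}},
\]
which is the \emph{reciprocal} of what you wrote. Your computation therefore actually establishes $E(a\oplus b)=q_0+1/E(b)$, the same formula as in the $p_0\neq 0$ case, not the displayed $q_0+E(b)$. The fault lies not with your method but with the lemma as stated: its $p_0=0$ branch is simply wrong. For instance, with $a=\llbracket 2,3\rrbracket$ and $b=\llbracket 0,5\rrbracket$ one has $a\oplus b=\llbracket 2,8\rrbracket$ and $E(a\oplus b)=17/8$, whereas the printed right-hand side gives $2+1/(3+1/5)=37/16$. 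The correct innermost term is $q_r+1/E(b)$ regardless of whether $p_0=0$, and once the reciprocal slip is fixed your induction proves exactly that. In the paper's explicit application (the proof of Corollary~\ref{ef}) the first entry of $-f(\cdots)$ is always at most $-2$, so only the correctly stated $p_0\neq 0$ case is actually invoked there.
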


The corollaries below follow  from Lemmas~\ref{tran},~\ref{tran2}, and~\ref{Eid}.

\begin{corollary}\label{ef}
For all $\llbracket q_0,q_1,q_2,\dots,q_r\rrbracket\in A_1$, $E(\llbracket q_0,q_1,q_2,\dots,q_r\rrbracket) = (E\circ f)(\llbracket q_0,q_1,q_2,\dots,q_r\rrbracket).$
\end{corollary}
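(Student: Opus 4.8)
The plan is to prove Corollary~\ref{ef} by induction on $r$, mirroring the recursive definition of $f$. The base case is immediate: if $r=0$, or if $q_i\neq 1$ for all $0<i<r$, then $f(\llbracket q_0,\dots,q_r\rrbracket)=\llbracket q_0,\dots,q_r\rrbracket$, so there is nothing to show. For the inductive step, suppose $q_j=1$ with $j$ minimal among indices $0<i<r$ for which $q_i=1$. By definition,
\[
f(\llbracket q_0,\dots,q_r\rrbracket) = \llbracket q_0,\dots,q_{j-1}+1\rrbracket\oplus -f(\llbracket q_{j+1}+1,q_{j+2},\dots,q_r\rrbracket).
\]

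First I would apply Lemma~\ref{tran} at the index $j$ to rewrite $E(\llbracket q_0,\dots,q_r\rrbracket)$ as $E(\llbracket q_0,\dots,q_{j-1}+1\rrbracket\oplus -\llbracket q_{j+1}+1,\dots,q_r\rrbracket)$; note Lemma~\ref{tran} applies since our vector lies in $A_1$ and $q_j=1$ with $0<j<r$. Next, using Lemma~\ref{Eid} with $a=\llbracket q_0,\dots,q_{j-1}+1\rrbracket$ (whose last entry $q_{j-1}+1$ is nonzero, since $q_{j-1}\geq 1$ when $j-1>0$, and $q_{j-1}+1$ could in principle be zero only if $j=1$ and $q_0=-1$ — I should check that case is handled, but $E$'s formula works regardless of the sign of $q_0$), I would peel off the first block and reduce the computation of $E$ of the $\oplus$-concatenation to an expression involving $E(-\llbracket q_{j+1}+1,\dots,q_r\rrbracket) = -E(\llbracket q_{j+1}+1,\dots,q_r\rrbracket)$. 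The same Lemma~\ref{Eid} computation applied to the right-hand side $\llbracket q_0,\dots,q_{j-1}+1\rrbracket\oplus -f(\llbracket q_{j+1}+1,\dots,q_r\rrbracket)$ reduces it to the same outer expression but with $E(-f(\llbracket q_{j+1}+1,\dots,q_r\rrbracket)) = -E(f(\llbracket q_{j+1}+1,\dots,q_r\rrbracket))$ in the innermost slot.

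It then suffices to show $E(\llbracket q_{j+1}+1,q_{j+2},\dots,q_r\rrbracket) = (E\circ f)(\llbracket q_{j+1}+1,q_{j+2},\dots,q_r\rrbracket)$, which is exactly the inductive hypothesis applied to a vector of strictly smaller length $r-j < r$. (One should also confirm that $\llbracket q_{j+1}+1,q_{j+2},\dots,q_r\rrbracket$ genuinely lies in $A_1$, so that $f$ is defined on it and Lemma~\ref{tran} may be reused downstream: its interior entries $q_{j+2},\dots,q_{r-1}$ are $\geq 1$ and its last entry $q_r>1$ as inherited from the original vector, while the new leading entry $q_{j+1}+1$ imposes no constraint, though if $q_{j+1}=1$ this leading entry is $2$; membership in $A_1$ only restricts interior and final entries, so this is fine — though one must note $q_{j+1}+1$ might be handled by an $\oplus$ merge if it happened to be produced as $0$, which cannot occur here since $q_{j+1}\geq 1$.)

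The main obstacle is bookkeeping rather than depth: keeping careful track of which entries are nonzero so that the two cases of the $\oplus$ operation in Lemma~\ref{Eid} are invoked correctly, and making sure the minimality of $j$ guarantees the prefix $\llbracket q_0,\dots,q_{j-1}+1\rrbracket$ has no interior $1$'s that would interfere (it does not, since $q_i\neq 1$ for $0<i<j$ by the choice of $j$, and modifying $q_{j-1}$ to $q_{j-1}+1$ cannot create a new interior $1$ unless $q_{j-1}=0$, impossible in $A_1$ for an interior index). Once these are in place, the identity follows by matching the two nested continued-fraction expressions term by term.
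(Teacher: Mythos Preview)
Your proposal is correct and takes essentially the same approach as the paper: strong induction on $r$, with the recursive branch handled by splitting at the minimal index $j$ with $q_j=1$, invoking Lemma~\ref{Eid} to separate the prefix from the tail, and applying the inductive hypothesis to the shorter vector $\llbracket q_{j+1}+1,\dots,q_r\rrbracket$. The only cosmetic difference is that you invoke Lemma~\ref{tran} to absorb the $1$ (which packages identity~\eqref{cfid}), whereas the paper unwinds \eqref{cfid} directly at the end of the chain; and you run the equalities from $E(\cdot)$ toward $(E\circ f)(\cdot)$ while the paper runs them in the opposite direction.
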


\begin{proof}
The proof follows by strong induction on $r$.
Suppose that $r=0$, then $(E\circ f)(\llbracket q_0\rrbracket) = E(\llbracket q_0\rrbracket).$
Now suppose that the corollary holds for $0\leq r\leq t$ for some $t\geq 0$. Let $\llbracket q_0,q_1,q_2,\dots,q_t,q_{t+1}\rrbracket\in A_1$. If $q_i\neq 1$ for all $0<i\leq t$, then the statement follows immediately from the definition of $f$. Otherwise, let $0<j\leq t$ be the smallest value such that $q_j = 1$. Then

\begin{eqnarray*}
    &&(E\circ f)(\llbracket q_0,q_1,q_2,\dots,q_{j-1},1,q_{j+1},\dots, q_t,q_{t+1}\rrbracket)\\ 
    &=& E(\llbracket q_0,q_1,q_2,\dots,q_{j-1}+1\rrbracket\oplus-f(\llbracket q_{j+1}+1,q_{j+2},\dots,q_{t+1}\rrbracket))\text{ by the definition of $f$}\\
    &=& q_0+\cfrac{1}{q_1+\cfrac{1}{q_2+\ddots+\cfrac{1}{q_{j-1}+1+\cfrac{1}{E(-f(\llbracket q_{j+1}+1,q_{j+2},\dots,q_{t+1}\rrbracket))}}}}\text{ by Lemma~\ref{Eid}}\\
    &=& q_0+\cfrac{1}{q_1+\cfrac{1}{q_2+\ddots+\cfrac{1}{q_{j-1}+1-\cfrac{1}{(E\circ f)(\llbracket q_{j+1}+1,q_{j+2},\dots,q_{t+1}\rrbracket)}}}}\\
    &=& q_0+\cfrac{1}{q_1+\cfrac{1}{q_2+\ddots+\cfrac{1}{q_{j-1}+1-\cfrac{1}{E(\llbracket q_{j+1}+1,q_{j+2},\dots,q_{t+1}\rrbracket)}}}}\text{ by the inductive hypothesis}\\
    &=& q_0+\cfrac{1}{q_1+\cfrac{1}{q_2+\ddots+\cfrac{1}{q_{j-1}+1-\cfrac{1}{q_{j+1}+1+\ddots+\cfrac{1}{q_{t+1}}}}}}\text{ by the definition of $E$}\\
    &=& q_0+\cfrac{1}{q_1+\cfrac{1}{q_2+\ddots+\cfrac{1}{q_{j-1}+\cfrac{1}{1+\cfrac{1}{q_{j+1}+\ddots+\cfrac{1}{q_{t+1}}}}}}}\text{ by~\eqref{cfid}}\\
    &=& E(\llbracket q_0,q_1,q_2,\dots,q_{j-1},1,q_{j+1},\dots, q_t,q_{t+1}\rrbracket).
\end{eqnarray*}
\end{proof}

\begin{corollary}\label{efc}
For all $x\in\mathbb{Q}$, $(E\circ f\circ C)(x)=x$.
\end{corollary}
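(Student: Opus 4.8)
The plan is to combine Corollary~\ref{ef} with the defining property of $C$. Recall that by definition of $C$, for any $x \in \mathbb{Q}$ the representation $[x_0, x_1, \dots, x_r]$ encoded by $C(x) = \llbracket x_0, x_1, \dots, x_r \rrbracket$ is the short continued fraction representation of $x$, and since $C(x) \in A_1 \subseteq A_0$, the map $E$ is defined on it. The key observation already noted in the excerpt is that $(E \circ C)(x) = x$; indeed $E(C(x)) = x_0 + \cfrac{1}{x_1 + \cfrac{1}{\ddots + \cfrac{1}{x_r}}}$ is exactly the value of the short continued fraction, which is $x$.

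The proof itself is then a one-line chain of equalities. First I would fix $x \in \mathbb{Q}$ and write $(E \circ f \circ C)(x) = E(f(C(x)))$. Since $C(x) \in A_1$, Corollary~\ref{ef} applies with $\llbracket q_0, \dots, q_r \rrbracket = C(x)$, giving $E(f(C(x))) = E(C(x))$. Finally, $E(C(x)) = x$ by the remark that $(E \circ C)(x) = x$. Stringing these together yields $(E \circ f \circ C)(x) = x$, which is exactly the claim.

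There is essentially no obstacle here: this corollary is a bookkeeping consequence that records the composite fact that starting from the short continued fraction of a rational and applying $f$ does not change the represented value. The only thing to be careful about is making sure the domains match up — that $C(x)$ lands in $A_1$, which is the stated codomain of $C$, so $f$ is indeed applicable — but this is immediate from the definitions. I would keep the proof to two or three sentences, citing Corollary~\ref{ef} and the already-observed identity $(E\circ C)(x) = x$.
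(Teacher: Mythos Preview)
Your proposal is correct and follows exactly the same approach as the paper, which simply says ``Apply the previous corollary to $C(x)$.'' You have spelled out the chain $E(f(C(x))) = E(C(x)) = x$ using Corollary~\ref{ef} and the identity $(E\circ C)(x)=x$, which is precisely what the paper intends.
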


\begin{proof}
Apply the previous corollary to $C(x)$.
\end{proof}

\begin{corollary}\label{eg}
For all $\llbracket q_0,q_1,q_2,\dots,q_r\rrbracket\in A_2$, $E(\llbracket q_0,q_1,q_2,\dots,q_r\rrbracket) = (E\circ g)(\llbracket q_0,q_1,q_2,\dots,q_r\rrbracket).$
\end{corollary}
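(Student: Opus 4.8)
The plan is to mimic the proof of Corollary~\ref{ef}, replacing the role of Lemma~\ref{tran} with Lemma~\ref{tran2} and the function $f$ with $g$. Specifically, I would proceed by strong induction on $r$. The base case $r=0$ is immediate, since $g(\llbracket q_0\rrbracket)=\llbracket q_0\rrbracket$ by definition, so $(E\circ g)(\llbracket q_0\rrbracket)=E(\llbracket q_0\rrbracket)$. For the inductive step, assume the statement holds for all elements of $A_2$ of length at most $t$, and take $\llbracket q_0,q_1,\dots,q_t,q_{t+1}\rrbracket\in A_2$. If $q_i>0$ for all $0<i\leq t+1$, then $g$ fixes the vector and there is nothing to prove. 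Otherwise, let $0<j\leq t+1$ be the smallest index with $q_j<0$.

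In that case I would expand $(E\circ g)(\llbracket q_0,\dots,q_r\rrbracket)$ using the recursive definition of $g$, obtaining $E(\llbracket q_0,\dots,q_{j-1}-1,1\rrbracket\oplus g(-\llbracket q_j+1,q_{j+1},\dots,q_{t+1}\rrbracket))$. Then I would apply Lemma~\ref{Eid} to peel off the head $\llbracket q_0,\dots,q_{j-1}-1,1\rrbracket$, pushing the tail into the denominator; note that the last entry of the head is $1$, so the continued fraction has the shape $\cdots+\cfrac{1}{q_{j-1}-1+\cfrac{1}{1+\cfrac{1}{E(g(-\llbracket q_j+1,\dots\rrbracket))}}}$. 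Applying the inductive hypothesis converts $E\circ g$ on the tail back to $E$, and then identity~\eqref{cfid} (used from left to right this time: $\alpha+\cfrac{1}{1+\cfrac{1}{\beta}}=\alpha+1-\cfrac{1}{\beta+1}$ with $\alpha=q_{j-1}-1$) collapses the "$-1,1$" back into a single entry $q_{j-1}$ while turning $E(g(-\llbracket q_j+1,\dots\rrbracket))+1$ into $-E(\llbracket q_j,q_{j+1},\dots,q_{t+1}\rrbracket)$; unwinding the definition of $E$ recovers $E(\llbracket q_0,q_1,\dots,q_{t+1}\rrbracket)$. One small bookkeeping point is that when $j=t+1$ the tail $-\llbracket q_j+1\rrbracket = \llbracket -(q_{t+1}+1)\rrbracket$ is a length-zero vector, and since $|q_{t+1}|>1$ we have $q_{t+1}+1\neq 0$, so all the $\oplus$ concatenations behave as the "$p_0\neq 0$" case and Lemma~\ref{Eid} applies cleanly; this edge case should be mentioned but needs no separate argument.

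The step I expect to require the most care is verifying that the hypotheses of Lemmas~\ref{tran2} and~\ref{Eid}, and of identity~\eqref{cfid}, are actually met throughout — in particular that the denominators appearing are never $0$ or $-1$, and that the intermediate vectors genuinely lie in the domains $A_0$, $A_1$, $A_2$ where needed (e.g.\ that $-\llbracket q_j+1,q_{j+1},\dots,q_r\rrbracket\in A_2$ so that $g$ may be applied to it, which uses $|q_i|>1$ for $i>j$ together with the smallest-index choice of $j$). These are exactly the well-definedness issues that the vector formalism of Section~2 was set up to handle, so in practice each is a short check against the definitions of $A_1$, $A_2$, and $\oplus$. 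Everything else is a direct transcription of the displayed computation in the proof of Corollary~\ref{ef} with the signs and the roles of $f$ and $g$ interchanged.
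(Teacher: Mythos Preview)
Your proposal is correct and is exactly the approach the paper takes: the paper's entire proof is the single sentence ``The corollary follows by strong induction on $r$ using Lemma~\ref{tran2} and the definition of the function $g$ similarly to the proof of Corollary~\ref{ef},'' and you have faithfully spelled out what that means. Your attention to the edge cases (the $j=t+1$ tail, the domain checks for $g$, and the nonvanishing needed for \eqref{cfid} and Lemma~\ref{Eid}) is appropriate and goes slightly beyond what the paper makes explicit.
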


\begin{proof}
The corollary follows by strong induction on $r$ using Lemma~\ref{tran2} and the definition of the function $g$ similarly to the proof of Corollary~\ref{ef}.
\end{proof}

Note that given a representation of a rational number $$q=q_0+\cfrac{1}{q_1+\cfrac{1}{q_2+\ddots+\cfrac{1}{q_r }}}$$ with $q_0\in\mathbb{Z}$, $|q_i|>1$ for $0<i<r$ and $|q_r|\geq 3$ when $r>0$, we can, by uniqueness, compute the short continued fraction representation of $q$ using~\eqref{cfid}. In particular, using Corollary~\ref{eg}, we get the following result.

\begin{lemma}\label{scf}
If $\llbracket q_0,q_1,\dots,q_r\rrbracket\in A_2$ and $|q_r|\geq 3$ when $r>0$, then $$g(\llbracket q_0,q_1,\dots,q_r\rrbracket)=(C\circ E)(\llbracket q_0,q_1,\dots,q_r\rrbracket)).$$
\end{lemma}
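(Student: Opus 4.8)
The plan is to show that $g(\llbracket q_0,\dots,q_r\rrbracket)$ is the short continued fraction representation of the rational number it encodes; since short continued fraction representations are unique and $C$ returns precisely that representation, this will give $g(\llbracket q_0,\dots,q_r\rrbracket) = C(E(\llbracket q_0,\dots,q_r\rrbracket))$ once we know $E$ is preserved. That last point is immediate from Corollary~\ref{eg}, which tells us $E(\llbracket q_0,\dots,q_r\rrbracket) = (E\circ g)(\llbracket q_0,\dots,q_r\rrbracket)$, so it suffices to prove that $g(\llbracket q_0,\dots,q_r\rrbracket)\in A_1$ satisfies the defining inequalities of a short continued fraction, namely that its tail entry (the analogue of $q_r$) is at least $2$ when the vector has length greater than $1$. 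The hypothesis $|q_r|\ge 3$ is exactly what is needed to make this work at the recursive base: when $g$ eventually stops, either the vector has length $1$ (nothing to check) or all interior and terminal entries are already positive, and the only entry that could have been altered at the final step is a terminal one, which the $|q_r|\ge 3$ assumption keeps safely away from the forbidden value $1$ even after a possible decrement by $1$ in the $\oplus$ operation.

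First I would set up an induction on $r$. For $r=0$ the statement is trivial: $g(\llbracket q_0\rrbracket)=\llbracket q_0\rrbracket$ and $C(E(\llbracket q_0\rrbracket))=C(q_0)=\llbracket q_0\rrbracket$. For the inductive step, if every $q_i>0$ for $0<i\le r$, then $\llbracket q_0,\dots,q_r\rrbracket$ already lies in $A_1$ (here $|q_r|\ge 3$ guarantees $q_r>1$), so $g$ fixes it and again uniqueness of short continued fractions finishes the case. Otherwise let $j$ be the least index with $q_j<0$; by definition
\[
g(\llbracket q_0,\dots,q_r\rrbracket) = \llbracket q_0,\dots,q_{j-1}-1,1\rrbracket \oplus g(-\llbracket q_j+1,q_{j+1},\dots,q_r\rrbracket).
\]
The vector $-\llbracket q_j+1,q_{j+1},\dots,q_r\rrbracket$ has length $r-j < r$, lies in $A_2$, and its terminal entry is $-q_r$, which still has absolute value $\ge 3$; so the inductive hypothesis applies and says $g(-\llbracket q_j+1,\dots,q_r\rrbracket)$ is the short continued fraction representation of $E(-\llbracket q_j+1,\dots,q_r\rrbracket)$. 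Combining this with Corollary~\ref{eg} (for the value) and Lemma~\ref{Eid} (to see that the $\oplus$ assembles $E$ correctly), the only remaining task is to check that the assembled vector actually lies in $A_1$, i.e.\ that prepending $\llbracket q_0,\dots,q_{j-1}-1,1\rrbracket$ via $\oplus$ does not create an interior entry equal to $0$ or destroy the terminal condition.

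The main obstacle, and the step deserving real care, is this last bookkeeping about $\oplus$. When the recursive output begins with a strictly positive first entry, $\oplus$ simply concatenates and the "$1$" we inserted becomes a legitimate interior partial quotient; one checks $q_{j-1}-1$ is still a valid interior entry (it is $\ge 0$, and if it equals $0$ one must confirm the short-continued-fraction normalization still goes through — this is where being slightly careful pays off, since the recursion's structure and the original membership in $A_2$ constrain the possibilities). When the recursive output begins with first entry $0$, the second branch of $\oplus$ merges it with the inserted "$1$", and one must verify that no cancellation produces a zero interior entry or a bad terminal entry; the terminal entry of the whole expression is inherited unchanged from the recursive call, where the hypothesis $|q_r|\ge 3$ (equivalently $|-q_r|\ge 3$) was precisely engineered so that, whatever decrement the $\oplus$ applied in forming the short continued fraction, the final partial quotient stays $>1$. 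Once these cases are dispatched, the assembled vector is in $A_1$, and uniqueness of short continued fraction representations together with the matching $E$-value forces it to equal $C(E(\llbracket q_0,\dots,q_r\rrbracket))$, completing the induction.
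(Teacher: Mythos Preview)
Your approach is essentially the same as the paper's, which simply notes (in the paragraph preceding the lemma) that applying identity~\eqref{cfid} repeatedly---which is precisely what $g$ does---produces the short continued fraction representation, and then invokes Corollary~\ref{eg} for the $E$-value; you just spell out by induction the verification that $g(\llbracket q_0,\dots,q_r\rrbracket)$ actually lands in $A_1$ under the hypothesis $|q_r|\ge 3$. One small remark: your worry that the interior entry $q_{j-1}-1$ might be $0$ is unfounded, since membership in $A_2$ forces $|q_{j-1}|>1$ (hence $q_{j-1}\ge 2$) whenever $j-1>0$, and when $j=1$ the entry $q_0-1$ is the leading term and carries no constraint.
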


The following lemma is needed for the proof of Proposition~\ref{fg} below.

\begin{lemma}\label{firstEntry}
For all $\llbracket q_0,q_1,q_2,\dots,q_r\rrbracket\in A_2$, if $$g(\llbracket q_0,q_1,q_2,\dots,q_r\rrbracket) = \llbracket q'_0,q'_1,\dots,q'_s\rrbracket,$$ then for any integer $n$, $$g(\llbracket q_0+n,q_1,q_2,\dots,q_r\rrbracket) = \llbracket q'_0+n,q'_1,\dots,q'_s\rrbracket.$$
\end{lemma}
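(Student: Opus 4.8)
The plan is to prove Lemma~\ref{firstEntry} by induction on the recursion depth of $g$, mirroring the recursive definition of $g$ directly. The key observation is that in the recursive formula for $g$, the first entry $q_0$ plays a purely passive role: adding $n$ to $q_0$ does not affect which branch of the definition is taken (the branch choice depends only on the signs of $q_1,\dots,q_r$), and it does not affect the recursive call (which is made on $-\llbracket q_j+1,q_{j+1},\dots,q_r\rrbracket$, a vector that does not involve $q_0$ at all). So the entire computation proceeds identically, with the sole difference that the leading entry of the output is shifted by $n$.

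More precisely, I would argue as follows. In the base case of the recursion ($r=0$, or $q_i>0$ for all $0<i\leq r$), we have $g(\llbracket q_0,q_1,\dots,q_r\rrbracket)=\llbracket q_0,q_1,\dots,q_r\rrbracket$, and likewise $g(\llbracket q_0+n,q_1,\dots,q_r\rrbracket)=\llbracket q_0+n,q_1,\dots,q_r\rrbracket$, which is exactly the asserted shift. In the recursive case, let $0<j\leq r$ be least with $q_j<0$. Then
$$g(\llbracket q_0,q_1,\dots,q_r\rrbracket)=\llbracket q_0,q_1,\dots,q_{j-1}-1,1\rrbracket\oplus g(-\llbracket q_j+1,q_{j+1},\dots,q_r\rrbracket),$$
and since $j\geq 1$ the index at which we append via $\oplus$ lies strictly after position $0$, so the first entry of the result is $q_0$ (note $\oplus$ only modifies the entry at the seam when the right operand starts with $0$, and even then it modifies $q_{j-1}-1$ or later, not $q_0$, because $j\geq 1$; the only subtlety is $j=1$, where the left operand is $\llbracket q_0,0\rrbracket$ — but $\oplus$ with a right operand whose first entry is $0$ then merges into the $0$ slot, i.e.\ into position $1$, still leaving $q_0$ untouched). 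Replacing $q_0$ by $q_0+n$ changes neither $j$, nor the recursive argument $-\llbracket q_j+1,q_{j+1},\dots,q_r\rrbracket$, nor anything in the $\oplus$ operation except the leading coordinate, so
$$g(\llbracket q_0+n,q_1,\dots,q_r\rrbracket)=\llbracket q_0+n,q_1,\dots,q_{j-1}-1,1\rrbracket\oplus g(-\llbracket q_j+1,q_{j+1},\dots,q_r\rrbracket),$$
which has the same coordinates as $g(\llbracket q_0,q_1,\dots,q_r\rrbracket)$ except the first, which is increased by $n$. (No inductive hypothesis on the recursive call is even needed, since that call is literally identical in both computations; the induction is really just well-foundedness of the recursion, or equivalently induction on $r-j$ together with the length.)

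The main thing to be careful about — the only place where something could go wrong — is the behavior of $\oplus$ at the seam when $j=1$, i.e.\ when the left operand is $\llbracket q_0, 0\rrbracket$ and so its last entry is $0$. One has to check that $\oplus$ in that case produces something whose first entry is still $q_0$ (it does: $\llbracket q_0,0\rrbracket\oplus\llbracket p_0,\dots\rrbracket$ is $\llbracket q_0, p_0,\dots\rrbracket$ if $p_0\neq 0$ and $\llbracket q_0, p_1,\dots\rrbracket$ if $p_0=0$; in neither case is $q_0$ altered), so the shift-by-$n$ statement survives. A brief remark handling this edge case explicitly should suffice; the rest is immediate from unwinding the definitions.
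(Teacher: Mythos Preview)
Your overall approach is correct and matches the paper's: the first entry of the output depends only on $q_0$ and the sign of $q_1$, so a shift of $q_0$ by $n$ propagates directly to a shift of $q'_0$ by $n$. The paper's proof is the one-line version of exactly this observation.

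However, your handling of the $j=1$ case contains an error. When $j=1$, the left operand in the recursion is not $\llbracket q_0,0\rrbracket$ but $\llbracket q_0-1,1\rrbracket$: the notation $\llbracket q_0,q_1,\dots,q_{j-1}-1,1\rrbracket$ with $j=1$ means the single entry $q_{j-1}=q_0$ gets decremented, then $1$ is appended. Consequently your claim that ``the first entry of the result is $q_0$'' in the recursive case is false when $j=1$; in that case the first entry is $q_0-1$. This is precisely why the paper says the first entry is ``either $q_0$ or $q_0-1$, \dots\ entirely determined by $q_1$.'' Fortunately this does not damage your argument: replacing $q_0$ by $q_0+n$ still changes the left operand to $\llbracket q_0+n-1,1\rrbracket$ and leaves the recursive call untouched, so the first entry of the output still shifts by exactly $n$. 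Once you correct the description of the $j=1$ left operand (and drop the spurious worry about $\oplus$ with a trailing $0$, which is not how $\oplus$ is defined), the proof goes through.
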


\begin{proof}
The result follows from noting that the first entry of $g(\llbracket q_0,q_1,q_2,\dots,q_r\rrbracket)$ is either $q_0$ or $q_0-1$, which is entirely determined by $q_1$ and nothing else.
\end{proof}

The following proposition shows that under certain circumstances, the functions $f$ and $g$ `undo' each other.

\begin{proposition}\label{fg}
For all $\llbracket q_0,q_1,q_2,\dots,q_r\rrbracket\in A_2$ with $|q_i|\geq 3$ for $0<i\leq r$, $$(f\circ g)(\llbracket q_0,q_1,q_2,\dots,q_r\rrbracket) = \llbracket q_0,q_1,q_2,\dots,q_r\rrbracket.$$
\end{proposition}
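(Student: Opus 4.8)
The plan is to prove Proposition~\ref{fg} by strong induction on $r$, mirroring the recursive structure of the functions $f$ and $g$. The base case $r=0$ is immediate since both $f$ and $g$ fix length-zero vectors. For the inductive step, assume the statement holds for all vectors in $A_2$ of length at most $t$ (with all entries of absolute value $\geq 3$ past the first), and consider $\llbracket q_0,q_1,\dots,q_{t+1}\rrbracket\in A_2$ with $|q_i|\geq 3$ for $0<i\leq t+1$.

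First I would split into two cases according to the definition of $g$. If $q_i>0$ for all $0<i\leq t+1$, then the vector already lies in $A_1$, so $g$ fixes it; moreover, since each $q_i\geq 3>1$, no entry equals $1$, so $f$ also fixes it, and we are done. Otherwise, let $0<j\leq t+1$ be the least index with $q_j<0$. By the definition of $g$,
\begin{align*}
g(\llbracket q_0,\dots,q_{t+1}\rrbracket) = \llbracket q_0,q_1,\dots,q_{j-1}-1,1\rrbracket\oplus g(-\llbracket q_j+1,q_{j+1},\dots,q_{t+1}\rrbracket).
\end{align*}
The key point is that $-\llbracket q_j+1,q_{j+1},\dots,q_{t+1}\rrbracket = \llbracket -(q_j+1),-q_{j+1},\dots,-q_{t+1}\rrbracket$ lies in $A_2$: its first entry $-(q_j+1)$ is $\geq 0$ (using $q_j\leq -1$, so in fact $\geq 0$, and it could be $0$ precisely when $q_j=-1$), and all later entries have absolute value $\geq 3$. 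Its length is at most $t$, so the inductive hypothesis applies — but I must be careful: the hypothesis requires $|q_i|\geq 3$ for \emph{all} positions past the first, which holds here; I do not need a condition on the first entry. So $(f\circ g)$ fixes this shorter vector.

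Now I would apply $f$ to $g(\llbracket q_0,\dots,q_{t+1}\rrbracket)$. Since $q_1,\dots,q_{j-1}$ are all $\geq 3$, and $q_{j-1}-1\geq 2$, the first index at which the value $1$ appears in $\llbracket q_0,\dots,q_{j-1}-1,1\rrbracket\oplus g(-\llbracket\dots\rrbracket)$ is exactly the position of that explicit $1$ (here I also need that the $\oplus$ does not merge in a way that destroys this, which is handled by Lemma~\ref{firstEntry} controlling the first entry of $g(-\llbracket\dots\rrbracket)$; if that first entry is $0$ — i.e. $q_j=-1$ — the $\oplus$ adds the next entry of $g(-\llbracket\dots\rrbracket)$ to the explicit $1$, and one checks this still produces a $1$ in that slot, or else traces through carefully). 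Applying the recursive clause of $f$ at that index gives $\llbracket q_0,\dots,q_{j-1}-1+1\rrbracket\oplus-f(g(-\llbracket q_j+1,q_{j+1},\dots,q_{t+1}\rrbracket)\ominus\text{(shift)})$, which by the inductive hypothesis collapses to $\llbracket q_0,\dots,q_{j-1}\rrbracket\oplus-(-\llbracket q_j+1,\dots,q_{t+1}\rrbracket) = \llbracket q_0,\dots,q_{j-1}\rrbracket\oplus\llbracket q_j+1,\dots,q_{t+1}\rrbracket$. Since $q_j+1\neq 0$ would fail only when $q_j=-1$, and $\llbracket q_0,\dots,q_{j-1}\rrbracket\oplus\llbracket q_j+1,\dots\rrbracket$ concatenates (or merges $q_{j-1}$ with $q_j+1$ when $q_j=-1$), in either case the $\oplus$ definition recovers exactly $\llbracket q_0,q_1,\dots,q_{t+1}\rrbracket$.

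The main obstacle I anticipate is the bookkeeping around the $\oplus$ operation and the $f$-recursion when $q_j=-1$, since then $g(-\llbracket q_j+1,\dots\rrbracket) = g(\llbracket 0,-q_{j+1},\dots\rrbracket)$ has first entry $0$ (by Lemma~\ref{firstEntry}, since the first entry of $g$ is $q_0$ or $q_0-1$ depending only on $q_1$, and here $-q_{j+1}<0$ forces it to be $0-1=-1$ — wait, one must recheck this), so the $\oplus$ in the formula for $g$ merges the explicit trailing $1$ with the first entry of the recursive term, and then locating the ``first $1$'' for the $f$-recursion requires care. I would handle this by a careful sub-analysis of the first one or two entries of $g(-\llbracket q_j+1,q_{j+1},\dots,q_{t+1}\rrbracket)$ using Lemma~\ref{firstEntry}, reducing everything to the position and value of those entries, and then verifying the $\oplus$-merge identities directly. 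Everything else is a routine unwinding of the two recursions in parallel.
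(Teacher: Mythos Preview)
Your approach is the paper's: unwind $g$ one step at the first negative entry, apply $f$ at the resulting explicit $1$, use Lemma~\ref{firstEntry} to absorb the $+1$ shift, and recurse. But your execution has two correctable errors.

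First, all of your worry about the case $q_j=-1$ (and the speculation that the first entry of $g(-\llbracket q_j+1,\dots\rrbracket)$ might be $0$ or $-1$) is misplaced. The hypothesis of the proposition is $|q_i|\geq 3$ for $0<i\leq r$, so $q_j\leq -3$ and $-(q_j+1)\geq 2$. By Lemma~\ref{firstEntry} the first entry $q'_{j+1}$ of $g(-\llbracket q_j+1,\dots\rrbracket)$ is $-(q_j+1)$ or $-(q_j+1)-1$, hence $\geq 1$. Thus the $\oplus$ is plain concatenation, the explicit $1$ really sits at position $j$, and there is nothing to sub-analyze. This is precisely where the hypothesis $|q_i|\geq 3$ does its work; without it the proposition fails.

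Second, your final collapse is off by one. After the $f$-recursion at position $j$ you must evaluate $f(\llbracket q'_{j+1}+1,q'_{j+2},\dots,q'_s\rrbracket)$. The role of Lemma~\ref{firstEntry} is to identify this shifted vector with $g(-\llbracket q_j,q_{j+1},\dots,q_r\rrbracket)$ (push the $+1$ into the \emph{argument} of $g$, not onto its output). The inductive hypothesis then applies to $-\llbracket q_j,q_{j+1},\dots,q_r\rrbracket$, giving
\[
\llbracket q_0,\dots,q_{j-1}\rrbracket\oplus -(f\circ g)(-\llbracket q_j,\dots,q_r\rrbracket)=\llbracket q_0,\dots,q_{j-1}\rrbracket\oplus\llbracket q_j,\dots,q_r\rrbracket=\llbracket q_0,\dots,q_r\rrbracket,
\]
since $q_j\neq 0$. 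Your version lands on $\llbracket q_0,\dots,q_{j-1}\rrbracket\oplus\llbracket q_j+1,\dots,q_r\rrbracket$, which is not the original vector.

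With these two fixes your argument is exactly the paper's proof, phrased as an explicit strong induction on $r$ rather than as a recursive identity iterated until Case~1.
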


\begin{proof}
The proof can be broken down into two cases.

\noindent{\bf Case 1:} The statement is clearly true if $r=0$ or $q_i>0$ for all $0<i\leq r.$

\noindent{\bf Case 2:} Suppose that $q_j<0$ and $q_i>0$ for $0<i<j$ for some $0<j\leq r$. Then
\begin{eqnarray*}
&&(f\circ g)(\llbracket q_0,q_1,q_2,\dots,q_r\rrbracket) \\
&=& f(\llbracket q_0,q_1,q_2,\dots,q_{j-1}-1,1\rrbracket\oplus g(-\llbracket q_j+1,q_{j+1},\dots,q_r\rrbracket))\\
&=& f(\llbracket q_0,q_1,q_2,\dots,q_{j-1}-1,1\rrbracket\oplus \llbracket q'_{j+1},\dots,q'_s\rrbracket)\text{ for some $s$}\\
&=& f(\llbracket q_0,q_1,q_2,\dots,q_{j-1}-1,1,q'_{j+1},\dots,q'_s\rrbracket)\\
&=& \llbracket q_0,q_1,q_2,\dots,q_{j-1}\rrbracket\oplus-f(\llbracket q'_{j+1}+1,\dots,q'_s\rrbracket)\text{ since $q_i\geq 3$ for $0<i<j$}\\
&=& \llbracket q_0,q_1,q_2,\dots,q_{j-1}\rrbracket\oplus-f(g(-\llbracket q_j,q_{j+1},\dots,q_r\rrbracket))\text{ by Lemma~\ref{firstEntry}}\\
&=& \llbracket q_0,q_1,q_2,\dots,q_{j-1}\rrbracket\oplus-(f\circ g)(-\llbracket q_j,q_{j+1},\dots,q_r\rrbracket).
\end{eqnarray*}
The desired result follows from repeatedly applying the identity from Case 2 until the problem is reduced to Case 1.
\end{proof}

Another main tool in the next section will be the following lemma, which is a straightforward generalization of~\cite[Lemma 5]{HMST2} and is similar to~\cite[Lemma 4]{EG1} and~\cite[Lemma 2.1]{EG2}.

\begin{lemma}\label{cf}
 Let $a/b$ be a rational number with $a/b=[q_0,q_1,q_2,\ldots,q_r]$, $\alpha\in\mathbb{Z}$, and $u$ and $v$ be nonnegative integers. It follows that
 \begin{enumerate}
    \item[(a)] $L_u^\alpha\begin{bmatrix}a\\b\end{bmatrix}=\begin{bmatrix}a\\au\alpha+b\end{bmatrix}$ and $R_v^\alpha\begin{bmatrix}a\\b\end{bmatrix}=\begin{bmatrix}a+bv\alpha\\b\end{bmatrix}$;
    \item[(b)] if $q_0=0$ and $u\alpha + q_1\neq 0$, then $\cfrac{a}{au\alpha+b}=[0,
u\alpha+ q_1,q_2,\dots,q_r]$;
    \item[(c)] if $q_0=0$ and $u\alpha + q_1=0$, then $\cfrac{a}{au\alpha+b}=[q_2,\dots,q_r]$;
    \item[(d)] if $q_0\neq0$, then $\cfrac{a}{au\alpha+b}=[0,
u\alpha,q_0,q_1,q_2,\dots,q_r]$; 
    \item[(e)]  $\cfrac{a+bv\alpha}{b}=[v\alpha+q_0,q_1,q_2,\dots,q_r]$.
 \end{enumerate}
\end{lemma}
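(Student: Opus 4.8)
The plan is to prove each of the five identities by tracking how the matrices $L_u^\alpha$ and $R_v^\alpha$ act on the column vector $\begin{bmatrix}a\\b\end{bmatrix}$ and then re-reading the resulting vector as a continued fraction. Part (a) is a direct matrix multiplication: writing out $L_u^\alpha=\begin{bmatrix}1&0\\u\alpha&1\end{bmatrix}$ and $R_v^\alpha=\begin{bmatrix}1&v\alpha\\0&1\end{bmatrix}$ (valid for all $\alpha\in\mathbb{Z}$ since these are the $\alpha$-th powers of the generating matrices), the two products follow immediately. This is the computational backbone; everything else is bookkeeping on continued fractions.

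For parts (b)--(d), I would start from the hypothesis $a/b=[q_0,q_1,\dots,q_r]$ and use part (a) to identify the new vector as $\begin{bmatrix}a\\au\alpha+b\end{bmatrix}$, so the target quantity is $a/(au\alpha+b)$. When $q_0=0$, the representation $a/b=[0,q_1,\dots,q_r]$ unpacks as $a/b = 1/(q_1 + [0,q_2,\dots,q_r]^{-1}\text{-type tail})$, i.e. $b/a = q_1 + 1/[q_2,\dots,q_r]$-style expansion; concretely $b/a=[q_1,q_2,\dots,q_r]$. Then $ (au\alpha+b)/a = u\alpha + b/a = u\alpha + [q_1,q_2,\dots,q_r] = [u\alpha+q_1,q_2,\dots,q_r]$, and inverting gives $a/(au\alpha+b)=[0,u\alpha+q_1,q_2,\dots,q_r]$, which is case (b) provided $u\alpha+q_1\neq 0$ so the inversion is legitimate. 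If instead $u\alpha+q_1=0$, then $(au\alpha+b)/a=[0,q_2,\dots,q_r]$, so its reciprocal is $[q_2,\dots,q_r]$, giving (c). For (d), when $q_0\neq 0$ we have $a/b=[q_0,q_1,\dots,q_r]$ with nonzero leading term, so $a/b$ itself plays the role of the tail: $b/a = [0,q_0,q_1,\dots,q_r]$, hence $(au\alpha+b)/a = u\alpha + [0,q_0,q_1,\dots,q_r] = [u\alpha,q_0,q_1,\dots,q_r]$, and inverting yields $a/(au\alpha+b)=[0,u\alpha,q_0,q_1,\dots,q_r]$. Part (e) is the easiest: by part (a) the new vector is $\begin{bmatrix}a+bv\alpha\\b\end{bmatrix}$, so $(a+bv\alpha)/b = v\alpha + a/b = v\alpha + [q_0,q_1,\dots,q_r] = [v\alpha+q_0,q_1,\dots,q_r]$.

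The only subtlety — and the step I expect to require the most care — is justifying the "prepend a $0$, then take reciprocal" manipulations: i.e. that for a rational $y$ with a given representation $[t_0,t_1,\dots,t_r]$ one has $1/y$ represented by $[0,t_0,t_1,\dots,t_r]$ when $t_0\neq 0$, and by $[t_1,\dots,t_r]$ when $t_0=0$. These are elementary consequences of the definition of the bracket notation as an iterated fraction, but one must be slightly careful about the degenerate cases (e.g. $r=0$, or a tail evaluating to something that would force a division by zero), which is exactly why the hypotheses $u\alpha+q_1\neq 0$ in (b) and $q_0=0$ versus $q_0\neq 0$ in (c)--(d) are stated. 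Since the statement only asserts the existence of \emph{a} continued fraction representation (not the short one), no uniqueness argument is needed, so once the reciprocal and prepend rules are in hand the proof is just a short case analysis matching the five listed situations.
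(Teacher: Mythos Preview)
Your proposal is correct and follows the natural approach: compute the matrix powers explicitly, read off the action on the column vector, and then use the two elementary continued-fraction moves ``add an integer to the leading term'' and ``take reciprocal by prepending or deleting a leading zero.'' The paper itself does not supply a proof of this lemma at all; it simply presents it as a straightforward generalization of \cite[Lemma~5]{HMST2} and as analogous to \cite[Lemma~4]{EG1} and \cite[Lemma~2.1]{EG2}, so there is nothing to compare against beyond noting that your argument is exactly the sort of routine verification the authors are implicitly deferring to those references.
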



\section{The Main Theorems}

We say that $\llbracket q_0,q_1,q_2,\dots,q_r\rrbracket\in A$ satisfies the $(u,v)$-divisibility property if $v|q_i$ when $i$ is even and $u|q_i$ when $i$ is odd. Before we state and prove our main results, Theorems~\ref{sanovlike} and ~\ref{sanovlikeGp}, we begin by showing that the entries of matrices in $\mathscr{S}_{u,v}$ or $\mathscr{G}_{u,v}$ generate rational numbers with continued fraction representations that share a nice relationship.


\begin{proposition}\label{oneFracMon}
For integers $u,v\geq 2$ and a matrix $M=\begin{bmatrix}
    a & b \\
    c & d
    \end{bmatrix}\in \mathscr{S}_{u,v}$, $C(c/a)$ satisfies the $(v,u)$-divisibility property if and only if $C(b/d)$ satisfies the $(u,v)$-divisibility property.
\end{proposition}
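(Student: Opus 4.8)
The plan is to relate $c/a$ and $b/d$ directly via the structure of $M$, so that their short continued fraction representations are visibly mirror images of one another under swapping the roles of $u$ and $v$. First I would record the basic identity forced by $M \in \mathscr{S}_{u,v} \subseteq SL_2(\mathbb{Z})$: writing $M = \begin{bmatrix} 1+uvn_1 & vn_2 \\ un_3 & 1+uvn_4\end{bmatrix}$, we have $\det M = ad - bc = 1$, and also $a$ and $c$ are coprime (since $ad - bc = 1$), as are $b$ and $d$. The natural move is to consider the transpose, or rather the "anti-transpose" $M' = \begin{bmatrix} d & b \\ c & a \end{bmatrix}$, noting that $M' $ again lies in $\mathscr{S}_{v,u}$ (the roles of $u$ and $v$ swap because the off-diagonal divisibility conditions swap). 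Then the fraction $b/d$ for $M$ corresponds to the fraction $c/a$ for $M'$ after the appropriate transposition, which is exactly the symmetry we want to exploit: a statement about $C(b/d)$ and the $(u,v)$-divisibility property becomes a statement about $C(c/a)$ for the anti-transposed matrix and the $(v,u)$-divisibility property.

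Next I would pin down the precise arithmetic link between $c/a$ and $b/d$. From $ad - bc = 1$ we get $ad \equiv 1 \pmod{c}$ and $ad \equiv 1 \pmod{b}$, so $b/d$ and $c/a$ are governed by the same unit $a \bmod (\text{something})$; more usefully, there is a classical fact that if $p/q = [q_0, q_1, \dots, q_r]$ in short continued fraction form and $p'/q'$ is built from the "reversed" partial quotients $[q_r, q_{r-1}, \dots, q_0]$, then $p' = p$ and $q'$ is the numerator of the convergent $[q_0,\dots,q_{r-1}]$ (equivalently $q q' \equiv (-1)^{r+1} \pmod p$, up to standard sign conventions). Concretely: if $\begin{bmatrix} a & b \\ c & d\end{bmatrix} = L_u^{a_1} R_v^{a_2} \cdots$ or whatever word, then reading the word backwards and transposing each generator gives a matrix whose $(b,d)$ entries come from the $(c,a)$ entries of the original — and transposing $L_u$ gives $R_u$, transposing $R_v$ gives $L_v$, which is precisely the $u \leftrightarrow v$ swap. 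But since we only assume $M \in \mathscr{S}_{u,v}$, not $M \in S_{u,v}$, I cannot use a generating word; instead I must argue purely from the $SL_2$ relation. So the cleanest route is: show that $C(b/d)$ satisfying the $(u,v)$-divisibility property is equivalent to $b/d$ admitting a continued fraction $[q_0, q_1, \dots, q_r]$ with $v \mid q_i$ ($i$ even), $u \mid q_i$ ($i$ odd), and then use Lemma~\ref{cf} together with the matrix identities to transfer this to a corresponding statement for $c/a$ with $u$ and $v$ interchanged. The reversal symmetry of continued fractions (numerator is invariant, denominator passes to the "co-denominator") is the engine here, combined with the observation that $ad - bc = 1$ forces $a$ and $d$ to play symmetric roles modulo $b$ (and $c$ and $b$ symmetric roles modulo... wait, I mean $a \equiv d^{-1} \bmod b$ type relations).

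Concretely the steps would be: (1) reduce to the case $0 < b/d$, $0 < c/a$ (or handle signs/zeros as degenerate cases directly from the matrix form — e.g. $c = 0$ forces $M$ to be a power of $R_v$, and then both fractions are $0$, trivially satisfying their divisibility properties); (2) establish the continued-fraction reversal identity: if $b/d = [q_0, \dots, q_r]$ is the short continued fraction and $d', b'$ are defined by $b'/d' = [q_r, \dots, q_0]$, then $b' = b$ and $d d' \equiv \pm 1 \pmod b$ — hence $d'$ is determined mod $b$ and one checks $d' \equiv a \pmod{\,\cdot\,}$... more carefully, I'd show $c/a = [q_r, \dots, q_0]$ exactly when the signs work out, using $ad - bc = 1$; (3) observe that the sequence $[q_r, q_{r-1}, \dots, q_0]$ satisfies the $(v,u)$-divisibility property if and only if $[q_0, q_1, \dots, q_r]$ satisfies the $(u,v)$-divisibility property — this is immediate since reversing a sequence of length $r+1$ maps even-indexed positions to positions of a fixed parity (depending on $r \bmod 2$) and we need the divisibility labels to track correctly, which they do because the reversal swaps "even slot" $\leftrightarrow$ "appropriate slot" in a way that converts $u$-conditions to $v$-conditions precisely when $r$ is... hmm, here a small parity case-check on $r$ is needed, and I'd do it explicitly; (4) conclude by uniqueness of short continued fractions (so $C(c/a)$ really is this reversed sequence, or a canonical modification of it).

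\textbf{Main obstacle.} The crux — and the step I expect to fight with — is step (2)/(3): correctly matching up the short continued fraction of $c/a$ with the reversed partial quotients of $b/d$, including getting the signs and the parity of $r$ right, and verifying the reversed sequence is still a \emph{short} continued fraction (all intermediate partial quotients $\geq 1$, last one $> 1$) so that Proposition's phrasing in terms of $C(\cdot)$ applies without having to first normalize. The relation $ad - bc = 1$ should supply exactly the congruence needed ($a \equiv$ the co-denominator of $b/d$), but turning that congruence into the \emph{equality} of continued fraction expansions requires care with ranges: one must use that $0 \le a < $ (something) or argue that two fractions with the same numerator and congruent, suitably-bounded denominators are equal. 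I would handle this by invoking the standard matrix encoding of convergents, $\begin{bmatrix} q_0 & 1 \\ 1 & 0\end{bmatrix}\cdots\begin{bmatrix} q_r & 1 \\ 1 & 0\end{bmatrix}$, whose transpose reverses the product — transposition being the clean algebraic shadow of both continued-fraction reversal and the $u \leftrightarrow v$ swap — and then reconcile this with the given form of $M$.
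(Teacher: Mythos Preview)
Your symmetry instinct is sound for reducing one direction to the other, but the matrix you wrote down is not the right one: the anti-transpose $M'=\begin{bmatrix} d & b\\ c & a\end{bmatrix}$ keeps the off-diagonal entries $b,c$ in the same positions, so $M'\in\mathscr{S}_{u,v}$, not $\mathscr{S}_{v,u}$. The operation that actually swaps the roles of $b/d$ and $c/a$ and lands in $\mathscr{S}_{v,u}$ is the $180^\circ$ rotation $\begin{bmatrix} d & c\\ b & a\end{bmatrix}$. Once corrected, this symmetry does reduce $(\Rightarrow)$ to $(\Leftarrow)$, which is essentially what the paper means by ``similar.''

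The real gap is your step (2). The continued-fraction reversal identity does not connect $b/d$ to $c/a$: if $b/d=[q_0,\dots,q_r]$ with convergent numerators $p_i$, then $[q_r,\dots,q_0]=p_r/p_{r-1}=b/p_{r-1}$, which is not $c/a$. More fundamentally, $b/d$ alone cannot determine $c/a$ even inside $\mathscr{S}_{u,v}$, since right-multiplying $M$ by any power of $L_u$ fixes the second column while changing the first; your plan offers no mechanism for recovering this lost degree of freedom. A concrete counterexample to the reversal claim: for $M=R_vL_u=\begin{bmatrix}1+uv & v\\ u & 1\end{bmatrix}$ one has $C(b/d)=\llbracket v\rrbracket$ while $C(c/a)=\llbracket 0,v,u\rrbracket$.

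The paper's proof handles exactly this missing piece. Assuming $C(b/d)=\llbracket v\alpha_0,u\alpha_1,\dots\rrbracket$ satisfies the $(u,v)$-divisibility property, it uses Lemma~\ref{cf} to peel the generators $R_v^{\alpha_0},L_u^{\alpha_1},\dots$ off the left of $M$ one by one; once the second column has been reduced to $\begin{bmatrix}0\\1\end{bmatrix}$, membership in $\mathscr{G}_{u,v}$ forces the residual matrix to be $L_u^{\alpha}$ for some integer $\alpha$, and nonnegativity in $\mathscr{S}_{u,v}$ gives $\alpha\ge 0$. (This same reduction also settles the parity of the length of $C(b/d)$---your step~(3) worry---by contradiction.) With the explicit word $M=R_v^{\alpha_0}L_u^{\alpha_1}\cdots R_v^{\alpha_{r-1}}L_u^{\alpha}$ in hand, the paper then computes $c/a$ directly via Lemma~\ref{cf} through a short case analysis on whether $\alpha_0$ and $\alpha$ vanish, and reads off the $(v,u)$-divisibility of $C(c/a)$. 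The idea your sketch is missing is therefore: use the divisibility hypothesis to \emph{factor} $M$ first, thereby recovering the hidden $L_u^{\alpha}$, and only then compute $c/a$.
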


\begin{proof}
Note that, by assumption, $a,d\neq 0$.

$(\Rightarrow)$ The proof in this direction is similar to that of the reverse direction. Since all subsequent results are stated in terms of $b/d$, we omit this proof.

$(\Leftarrow)$  Suppose that $C(b/d)$ has the $(u,v)$-divisibility property. We first show that it must be the case that $C(b/d)=\llbracket v\alpha_0,u\alpha_1,\dots,v\alpha_{r-1}\rrbracket$ for some odd $r$.

Suppose that $C(b/d)=\llbracket v\alpha_0,u\alpha_1,\dots,u\alpha_r\rrbracket$ and consider the matrix $$N=R_v^{-\alpha_{r-1}}\cdots L_u^{-\alpha_1}R_v^{-\alpha_0}M.$$ By repeatedly applying Lemma~\ref{cf} parts (c) and (e), we get that $N=\begin{bmatrix}
    a' & b' \\
    c' & d'
    \end{bmatrix}$ with $b'/d'=1/(u\alpha_r),$ a contradiction since $N\in\mathscr{G}_{u,v}$ by Proposition~\ref{GuvIN}.

So we must have that $C(b/d)=\llbracket v\alpha_0,u\alpha_1,\dots,v\alpha_{r-1}\rrbracket$ and, using the same definition for $N$ as above, $N=\begin{bmatrix}
    a' & 0 \\
    c' & d'
    \end{bmatrix}$. Since $N\in\mathscr{G}_{u.v}$, we must have that $a'=1$, $d'=1$ and $c'=u\alpha$ for some $\alpha\in\mathbb{Z},$ i.e., $N=L_u^\alpha$. Note that we cannot have $a'=-1$ and $d'=-1$, as these values are not congruent to $1\pmod{uv}$ as is required by the definition of $\mathscr{G}_{u,v}$. In particular, $M=R_v^{\alpha_0}L_u^{\alpha_1}\cdots R_v^{\alpha_{r-1}}L_u^\alpha.$

Since $M\in\mathscr{S}_{u,v}$, we must have that $\alpha\geq 0$; otherwise, $a$ or $c$ would be negative. 
By repeatedly applying Lemma~\ref{cf} parts (d) and (e) to the following exhaustive list of cases, we get that $c/a=[u\beta_0,v\beta_1,\dots,u\beta_k]$ for some $k\geq 0$ where $\beta_0\geq 0$ and $\beta_i>0$
for $0<i\leq k$.

\bigskip
\noindent{\bf Case 1:} If $r = 1$ and $\alpha_0=0$, then $c/a=[u\alpha].$

\noindent{\bf Case 2:} If $r = 1$, $\alpha_0>0$, and $\alpha=0$, then $c/a=[0].$

\noindent{\bf Case 3:} If $r = 1$, $\alpha_0>0$, and $\alpha>0$, then $c/a=[0,v\alpha_0,u\alpha].$

\noindent{\bf Case 4:} If $r > 1$, $\alpha_0=0$, and $\alpha=0$, then $c/a=[u\alpha_1, v\alpha_2,\dots, v\alpha_{r-3}, u\alpha_{r-2}].$

\noindent{\bf Case 5:} If $r > 1$, $\alpha_0>0$, and $\alpha=0$, then $c/a=[0,v\alpha_0,u\alpha_1,\dots, v\alpha_{r-3}, u\alpha_{r-2}].$

\noindent{\bf Case 6:} If $r > 1$, $\alpha_0=0$, and $\alpha>0$, then $c/a=[u\alpha_1, v\alpha_2, \dots,v\alpha_{r-1},u\alpha].$ 

\noindent{\bf Case 7:} If $r > 1$, $\alpha_0>0$, and $\alpha>0$, then $c/a=[ 0,v\alpha_0,u\alpha_1,\dots,v\alpha_{r-1},u\alpha].$

\bigskip
The result now follows from the fact that $C(c/a)=\llbracket u\beta_0,v\beta_1,\dots,u\beta_k\rrbracket$ by the definition of $C.$
\end{proof}

\begin{proposition}\label{oneFracGroup}
For integers $u,v\geq 3$ and a matrix $M=\begin{bmatrix}
    a & b \\
    c & d
    \end{bmatrix}\in \mathscr{G}_{u,v}$, $(f\circ C)(c/a)$ satisfies the $(v,u)$-divisibility property if and only if $(f\circ C)(b/d)$ satisfies the $(u,v)$-divisibility property.
\end{proposition}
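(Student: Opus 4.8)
The plan is to mimic the structure of the proof of Proposition~\ref{oneFracMon}, replacing the monoid argument with one that tracks the $f$-reduced forms. By symmetry it suffices to prove one direction; assume $(f\circ C)(b/d)$ satisfies the $(u,v)$-divisibility property. Write $(f\circ C)(b/d) = \llbracket v\alpha_0, u\alpha_1, v\alpha_2, \dots\rrbracket$; since this vector lies in $A_2$, each $\alpha_i$ with $i>0$ satisfies $|u\alpha_i|\geq 2$ (in fact $|u\alpha_i|\geq u\geq 3$), so the hypotheses of Proposition~\ref{fg} and Lemma~\ref{scf} are available. First I would argue, exactly as in Cases analysis of Proposition~\ref{oneFracMon}, that the last index $r$ must be even, i.e. the last partial quotient is of the form $v\alpha_{r}$: if instead $(f\circ C)(b/d)$ ended in $u\alpha_r$, then stripping off $R_v^{\alpha_0}, L_u^{\alpha_1}, \dots$ via repeated application of Lemma~\ref{cf}(c),(e) would leave a matrix $N\in\mathscr{G}_{u,v}$ with $b'/d' = 1/(u\alpha_r)$, contradicting that the off-diagonal entries of members of $\mathscr{G}_{u,v}$ force $b'\equiv 0\pmod v$, $d'\equiv 1\pmod{uv}$ (so $b'/d'$ in lowest terms cannot be $1/(u\alpha_r)$ with $|u\alpha_r|\geq 3$). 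The subtlety here versus the monoid case is that $E\circ f\circ C = \mathrm{id}$ (Corollary~\ref{efc}) is what lets us recover $b/d$ from the reduced form, and Lemma~\ref{cf} must be applied to the short continued fraction $C(b/d)$ — so I would first use Proposition~\ref{fg} together with Lemma~\ref{scf} (equivalently $g = C\circ E$ on $A_2$ with large tail) to pass between $(f\circ C)(b/d)$ and $C(b/d)$, noting that $g$ applied to the $(u,v)$-divisible reduced form produces $C(b/d)$.

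Next I would peel off the matrices: with $N = R_v^{-\alpha_{r-1}}L_u^{-\alpha_{r-2}}\cdots L_u^{-\alpha_1}R_v^{-\alpha_0} M$ computed via Lemma~\ref{cf}(c),(e) applied to $C(b/d)$ (after the translation from the reduced form), we get $N = \begin{bmatrix} a' & 0 \\ c' & d'\end{bmatrix}\in\mathscr{G}_{u,v}$, whence $a'd' = 1$ and $c' = u\alpha$; unlike the monoid case, here both $(a',d') = (1,1)$ and $(a',d') = (-1,-1)$ are $SL_2$-valid, but the congruence conditions $a'\equiv d'\equiv 1\pmod{uv}$ in the definition of $\mathscr{G}_{u,v}$ again rule out $(-1,-1)$, so $N = L_u^\alpha$ and $M = R_v^{\alpha_0}L_u^{\alpha_1}\cdots R_v^{\alpha_{r-1}}L_u^{\alpha}$ with $\alpha\in\mathbb{Z}$ arbitrary (no sign restriction, since we are in a group, not a monoid). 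Then I would run through the analogue of the seven-case list: apply Lemma~\ref{cf}(d),(e) repeatedly to build up $c/a$ from the representation $[u\alpha, 0, v\alpha_{r-1}, u\alpha_{r-2}, \dots, v\alpha_1, \ast]$ read off the product, obtaining a continued fraction for $c/a$ whose partial quotients (after the leading $0$, if present) all satisfy the $(v,u)$-divisibility property and have absolute value $\geq \min(u,v)\geq 3$ except possibly the last. The cases split on whether $\alpha_0 = 0$ or not, whether $\alpha = 0$ or not, and whether $r$ is small; Lemma~\ref{cf}(c) handles the coalescing when a partial quotient becomes $0$, and since $u,v\geq 3$ no partial quotient can accidentally become $\pm 1$.

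Finally, the continued fraction for $c/a$ obtained this way is a representation in which all intermediate partial quotients have absolute value $\geq 3$ and are divisible appropriately; by Lemma~\ref{scf} (or directly by Corollary~\ref{eg} and uniqueness of short continued fractions), applying $g$ to the corresponding vector in $A_2$ yields $C(c/a)$, and by Proposition~\ref{fg} applying $f$ to that $A_2$-vector recovers it unchanged — so $(f\circ C)(c/a)$ equals that very vector, which satisfies the $(v,u)$-divisibility property. I expect the main obstacle to be the bookkeeping in the seven-case enumeration: one must verify in each case that the tail partial quotient of $c/a$ has absolute value $\geq 3$ (so that Lemma~\ref{scf}/Proposition~\ref{fg} apply) and that no cancellation produces a partial quotient of absolute value $\leq 1$; this is where $u,v\geq 3$ (rather than $\geq 2$) is essential, and it is the reason the group statement is restricted to $u,v\geq 3$ whereas the monoid statement holds for $u,v\geq 2$. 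A secondary technical point is making precise the passage between $(f\circ C)(b/d)$ and $C(b/d)$ via $g$ and $f$ at the start; I would isolate that as a short preliminary observation using Proposition~\ref{fg}, Lemma~\ref{scf}, and Corollary~\ref{efc}.
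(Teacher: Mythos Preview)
Your proposal is correct and follows essentially the same route as the paper: reduce to the seven-case analysis of Proposition~\ref{oneFracMon} to obtain $c/a=[u\beta_0,v\beta_1,\dots,u\beta_k]$ with $\llbracket u\beta_0,\dots,u\beta_k\rrbracket\in A_2$, then apply Lemma~\ref{scf} and Proposition~\ref{fg} to conclude $(f\circ C)(c/a)=\llbracket u\beta_0,\dots,u\beta_k\rrbracket$. One small simplification: you do not need to translate $(f\circ C)(b/d)$ back to $C(b/d)$ before invoking Lemma~\ref{cf}, since that lemma applies to \emph{any} continued fraction representation; Corollary~\ref{efc} already gives $b/d=[v\alpha_0,u\alpha_1,\dots]$ directly, and the peeling argument (parts (c), (e)) works on that representation as written.
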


\begin{proof}
$(\Rightarrow)$ As in Proposition~\ref{oneFracMon}, we omit this portion of the proof using the same rationale.

$(\Leftarrow)$ Following the same steps as in the proof of Proposition~\ref{oneFracMon}, we get that $c/a=[u\beta_0,v\beta_1,\dots,u\beta_k]$ where $\llbracket u\beta_0,v\beta_1,\dots,u\beta_k\rrbracket\in A_2.$ Note that in this case, we are not forced (or required) to conclude that the $\alpha$ obtained in the proof is nonnegative. Now

\begin{align*}
    (f\circ C)(c/a) &= (f\circ C)(E(\llbracket u\beta_0,v\beta_1,\dots,u\beta_k\rrbracket))\\
    &= (f\circ C\circ E)(\llbracket u\beta_0,v\beta_1,\dots,u\beta_k\rrbracket)\\
    &= (f\circ g)(\llbracket u\beta_0,v\beta_1,\dots,u\beta_k\rrbracket)\text{ by Lemma~\ref{scf}}\\
    &= \llbracket u\beta_0,v\beta_1,\dots,u\beta_k\rrbracket\text{ by Proposition~\ref{fg}.}
\end{align*}
\end{proof}

Propositions~\ref{oneFracMon} and~\ref{oneFracGroup} allow us to state our main results in terms of $b/d$ alone.

\begin{theorem}\label{sanovlike}
For integers $u,v\geq 2$ and a matrix $M=\begin{bmatrix}
    a & b \\
    c & d
    \end{bmatrix}\in \mathscr{S}_{u,v}$, $M\in S_{u,v}$ if and only if $C(b/d)$ satisfies the $(u,v)$-divisibility property.
\end{theorem}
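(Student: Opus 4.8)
The plan is to prove both directions by induction on the length of the alternating product representation of $M$. For the forward direction ($M\in S_{u,v}$ implies $C(b/d)$ has the $(u,v)$-divisibility property), I would write $M=R_v^{a_0}L_u^{a_1}\cdots$ as an alternating product of nonnegative powers (every element of $S_{u,v}$ is such a product since $S_{u,v}$ is freely generated), and track the short continued fraction representation of $b/d$ as each generator is peeled off. The base case $M=I$ gives $b/d=0$ with $C(0)=\llbracket 0\rrbracket$, which trivially satisfies the property. For the inductive step, I would use Lemma~\ref{cf}: multiplying on the left by $L_u^{\alpha}$ with $\alpha>0$ transforms $b/d$ according to parts (b), (c), or (d), and multiplying by $R_v^{\alpha}$ transforms it according to part (e). The key point is that prepending $u\alpha$ (an odd-position entry) or adding $v\alpha$ to the leading even-position entry preserves the divisibility pattern, and one must check that the short continued fraction stays short — i.e. partial quotients remain positive — which holds because all the $\alpha$'s are positive and we are prepending to a representation that is already short with nonnegative entries. (Case (c), where the leading two entries collapse, needs a small separate check but also preserves the pattern.)

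For the reverse direction ($C(b/d)$ has the $(u,v)$-divisibility property implies $M\in S_{u,v}$), I would essentially reverse the construction, mirroring the argument already carried out in the proof of Proposition~\ref{oneFracMon}. Writing $C(b/d)=\llbracket v\alpha_0,u\alpha_1,\dots\rrbracket$, I would show (exactly as in that proposition) that the last entry must be in an even position, so $C(b/d)=\llbracket v\alpha_0,u\alpha_1,\dots,v\alpha_{r-1}\rrbracket$, and then strip off generators $R_v^{-\alpha_0},L_u^{-\alpha_1},\dots,R_v^{-\alpha_{r-1}}$ from the left using Lemma~\ref{cf}(c),(e) to reach a matrix $N$ with second column $(0,1)^T$. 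Since $N\in\mathscr{S}_{u,v}$, forcing $a'=d'=1$ and $c'=u\alpha$ with $\alpha\ge 0$ (the nonnegativity is exactly where we use $M\in\mathscr{S}_{u,v}\subseteq SL_2(\mathbb{N})$, so $a,c\ge 0$), we get $N=L_u^{\alpha}\in S_{u,v}$, hence $M=R_v^{\alpha_0}L_u^{\alpha_1}\cdots R_v^{\alpha_{r-1}}L_u^{\alpha}$. The only thing left is to check that each exponent $\alpha_i$ is nonnegative so that this is a genuine product of generators (not inverses); this follows because $C(b/d)\in A_1$ has all interior partial quotients $\ge 1$ and the entries are $v\alpha_i$ or $u\alpha_i$ with $u,v\ge 2>0$, together with the sign constraints forced by $a,b,c,d\ge 0$ on the two ends.

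The main obstacle I anticipate is the bookkeeping at the two ends of the continued fraction — handling the leading entry $q_0$ (which may be zero, positive, and interacts with whether the first stripped generator is $R_v$ or $L_u$) and the final collapse to $L_u^{\alpha}$ — cleanly enough to cover all cases without an explosion of subcases. This is precisely the role of the seven-case analysis in Proposition~\ref{oneFracMon}; here the cleanest route is probably to \emph{invoke} that case analysis directly rather than redo it, since the reverse direction of the present theorem is really the statement that the construction in Proposition~\ref{oneFracMon} produces an element of $S_{u,v}$ (not merely of $\mathscr{S}_{u,v}$), and the extra input needed is only the nonnegativity of all exponents. A secondary subtlety is making sure that when Lemma~\ref{cf}(c) causes two partial quotients to merge, the resulting shorter vector is still the \emph{short} representation (so that $C$ behaves as expected at the next induction step); this is automatic because merging $[0,0,q_2,\dots]$ yields $[q_2,\dots]$ which inherits shortness. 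I would present the forward direction in full and then note that the reverse direction follows by retracing the proof of Proposition~\ref{oneFracMon} while observing that the hypothesis $M\in\mathscr{S}_{u,v}$ forces all exponents to be nonnegative, so the reconstructed word lies in $S_{u,v}$.
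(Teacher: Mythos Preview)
Your proposal is correct and matches the paper's intended argument: the paper omits the proof of this theorem entirely, pointing to the proof of Theorem~\ref{sanovlikeGp}, which for the forward direction computes $b/d=[v\alpha_0,u\alpha_1,\dots,v\alpha_{r-1}]$ directly from the word $M=R_v^{\alpha_0}L_u^{\alpha_1}\cdots$ via Lemma~\ref{cf} (your inductive peeling is the same computation unrolled), and for the reverse direction literally cites the first paragraph of the ``if'' part of Proposition~\ref{oneFracMon}, exactly as you propose. The only simplification in the monoid setting is that the $f\circ g$ machinery is unnecessary because nonnegative exponents already yield the short continued fraction, which you also identify.
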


The proof of Theorem~\ref{sanovlike} is similar to that of Theorem~\ref{sanovlikeGp} below. We therefore omit the proof. 


\begin{theorem}\label{sanovlikeGp}
For integers $u,v\geq 3$ and a matrix $M=\begin{bmatrix}
    a & b \\
    c & d
    \end{bmatrix}\in \mathscr{G}_{u,v}$, $M\in G_{u,v}$ if and only if $(f\circ C)(b/d)$ satisfies the $(u,v)$-divisibility property.
\end{theorem}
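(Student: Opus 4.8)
The plan is to prove both directions by induction on the length of the alternating-word representation, using Lemma~\ref{cf} to track how multiplication by $L_u^{\pm}$ and $R_v^{\pm}$ transforms the continued fraction of $b/d$, and then translating between continued fractions valued in $u\mathbb{Z}$, $v\mathbb{Z}$ and short continued fractions via the functions $f$, $g$, $C$, $E$ established in Section~2.

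For the forward direction ($M\in G_{u,v}\Rightarrow(f\circ C)(b/d)$ has the $(u,v)$-divisibility property), I would write $M=X_1X_2\cdots X_n$ as a reduced alternating product of nonzero powers of $L_u$ and $R_v$, and induct on $n$. The base case $n=0$ gives $M=I$, so $b/d=0/1$ and $(f\circ C)(0)=\llbracket 0\rrbracket$, which trivially satisfies the property. For the inductive step, I would peel off the leftmost factor: if $M=R_v^{\alpha}M'$ with $M'=\begin{bmatrix}a'&b'\\c'&d'\end{bmatrix}$, then by Lemma~\ref{cf}(a),(e), $b/d=(b'+d'v\alpha)/d'$, so $C(b/d)$ is obtained from $C(b'/d')$ by adding $v\alpha$ to the zeroth partial quotient; by Lemma~\ref{firstEntry}, the same relationship holds after applying $f$, so $(f\circ C)(b/d)$ has the $(u,v)$-divisibility property iff $(f\circ C)(b'/d')$ does — and it does by induction. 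If instead $M=L_u^{\alpha}M'$, then $b/d=b'/(b'u\alpha+d')$, and Lemma~\ref{cf}(b),(c),(d) describes the three possible effects on $C(b'/d')$: prepending $\llbracket 0,u\alpha\rrbracket$, modifying the first partial quotient, or cancelling the first two. In each subcase I would check that $(f\circ C)(b/d)$ inherits (or regains, in the cancellation case) the $(u,v)$-divisibility property; the key point is that $f$ is precisely designed to clean up the $1$'s that may appear and that the divisibility structure ($v\mid$ even-index, $u\mid$ odd-index) is respected by all these operations, using $u,v\geq 3$ so that entries like $u\alpha$ or $v\alpha$ with $\alpha\neq 0$ have absolute value $\geq 3$ (this is what keeps us inside $A_2$ and makes Proposition~\ref{fg} and Lemma~\ref{scf} applicable).

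For the converse ($(f\circ C)(b/d)$ has the $(u,v)$-divisibility property $\Rightarrow M\in G_{u,v}$), I would run essentially the argument already carried out inside the proof of Proposition~\ref{oneFracGroup}: writing $(f\circ C)(b/d)=\llbracket v\alpha_0,u\alpha_1,v\alpha_2,\ldots\rrbracket$, one peels off matrices $R_v^{\alpha_0}, L_u^{\alpha_1},\ldots$ from the left of $M$ using Lemma~\ref{cf}, reducing the continued fraction of the $b/d$-coordinate step by step. One must first argue (as in Proposition~\ref{oneFracMon}) that the list has odd length, i.e.\ ends in a $v$-divisible term, because otherwise the reduced matrix $N$ would have $b'/d'=1/(u\alpha_r)$, impossible since $N\in\mathscr{G}_{u,v}$ forces $b'$ divisible by $v$. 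Once the list has odd length, the fully reduced matrix $N$ has upper-right entry $0$, hence is lower-triangular in $\mathscr{G}_{u,v}$, hence equals $L_u^{\alpha}$ for some $\alpha\in\mathbb{Z}$ (the diagonal entries must be $1$, not $-1$, since $-1\not\equiv 1\pmod{uv}$). Therefore $M=R_v^{\alpha_0}L_u^{\alpha_1}\cdots L_u^{\alpha}\in G_{u,v}$. Here, unlike the monoid case, there is no sign constraint on $\alpha$ or the $\alpha_i$, which is exactly why the group statement goes through for all of $\mathscr{G}_{u,v}$.

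The main obstacle, and the place requiring the most care, is the forward direction when the leftmost factor is a power of $L_u$ and Lemma~\ref{cf}(b) or (c) applies: here the first partial quotient of $C(b'/d')$ gets altered or two partial quotients get absorbed, and one must verify that after reapplying $f$ the resulting vector still lies in $A_2$ with the correct divisibility pattern. The bookkeeping is delicate because $f$ can propagate changes deep into the tail of the vector via the recursion, so I would isolate this as a technical sublemma: \emph{if $\llbracket q_0,q_1,\ldots,q_r\rrbracket\in A_1$ has $(u,v)$-divisibility after applying $f$, then so does $\llbracket 0,u\alpha,q_0,q_1,\ldots,q_r\rrbracket$ and (when the relevant sum is nonzero) $\llbracket 0,u\alpha+q_1,q_2,\ldots,q_r\rrbracket$, while $\llbracket q_2,\ldots,q_r\rrbracket$ already has it when $u\alpha+q_1=0$}, relying on Corollary~\ref{ef}, Lemma~\ref{scf}, Proposition~\ref{fg}, and Lemma~\ref{firstEntry} to pass freely between the $A_1$ and $A_2$ pictures. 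Everything else is a routine unwinding of the definitions.
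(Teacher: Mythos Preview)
Your reverse direction is exactly the paper's: peel off $R_v^{\alpha_0},L_u^{\alpha_1},\dots$ from the left, rule out the even-length case by the contradiction $b'/d'=1/(u\alpha_r)$, and conclude $N=L_u^{\alpha}$ since $-1\not\equiv 1\pmod{uv}$.

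Your forward direction, however, takes a more laborious route than the paper's, and the place you flag as ``the main obstacle'' is precisely where the paper's argument sidesteps the difficulty entirely. Instead of peeling factors off the left and tracking how $C$ and then $f$ react (which, as you note, forces you into a delicate case analysis when $L_u^{\alpha}$ is the leftmost factor and the resulting expression $[0,u\alpha,q_0,\dots,q_r]$ need not lie in $A_1$), the paper builds the continued fraction from the right: writing $M=R_v^{\alpha_0}L_u^{\alpha_1}\cdots R_v^{\alpha_{r-1}}L_u^{\alpha_r}$ with $r$ odd, repeated use of Lemma~\ref{cf}(d),(e) starting from $\begin{bmatrix}b_0\\d_0\end{bmatrix}=\begin{bmatrix}0\\1\end{bmatrix}$ gives $b/d=[v\alpha_0,u\alpha_1,\dots,v\alpha_{r-1}]$ directly. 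The vector $\llbracket v\alpha_0,u\alpha_1,\dots,v\alpha_{r-1}\rrbracket$ already lies in $A_2$ with last entry of absolute value $\geq 3$ (here is where $u,v\geq 3$ is used), so Lemma~\ref{scf} gives $g(\llbracket v\alpha_0,\dots,v\alpha_{r-1}\rrbracket)=C(b/d)$, and a single application of Proposition~\ref{fg} yields $(f\circ C)(b/d)=(f\circ g)(\llbracket v\alpha_0,\dots,v\alpha_{r-1}\rrbracket)=\llbracket v\alpha_0,\dots,v\alpha_{r-1}\rrbracket$. No sublemma is needed.

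Your approach is not wrong in spirit---the sublemma you propose is essentially equivalent, after unwinding, to the combination of Lemma~\ref{scf} and Proposition~\ref{fg}---but it hides the clean structure. Two small corrections: Lemma~\ref{firstEntry} as stated in the paper concerns $g$, not $f$ (an analogous statement does hold for $f$, by the same reasoning, but you would need to state and prove it); and the objects $\llbracket 0,u\alpha,q_0,\dots,q_r\rrbracket$ in your sublemma are generally not in $A_1$ when $\alpha<0$, so you cannot feed them directly to $f$ without first passing through $C\circ E$, which again is exactly what Lemma~\ref{scf} packages.
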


\begin{proof}
$(\Rightarrow)$ Suppose that $M\in G_{u,v}$. Then $M=R_v^{\alpha_0}L_u^{\alpha_1}R_v^{\alpha_2}\cdots R_v^{\alpha_{r-1}}L_u^{\alpha_r}$ where $\alpha_i\in\mathbb{Z}_{\neq 0}$ for $0<i<r$, $\alpha_0,\alpha_r\in\mathbb{Z}$, and $r$ is odd. For $0
\leq i\leq r$, let $$\begin{bmatrix}
    a_i & b_i \\
    c_i & d_i
    \end{bmatrix}=\begin{cases}
    L_u^{\alpha_{r-i}}\cdots R_v^{\alpha_{r-1}}L_u^{\alpha_r} & \text{ if $i$ is even},\\
    R_v^{\alpha_{r-i}}\cdots R_v^{\alpha_{r-1}}L_u^{\alpha_r} & \text{ if $i$ is odd}.
    \end{cases}$$
Then $\begin{bmatrix}b_0\\d_0\end{bmatrix}=\begin{bmatrix}0\\1\end{bmatrix}$ and, by Lemma~\ref{cf} part (a), for $0<i\leq r$, $$\begin{bmatrix}b_i\\d_i\end{bmatrix} = \begin{cases}
\begin{bmatrix}b_{i-1}\\b_{i-1}u\alpha_{r-i}+d_{i-1}\end{bmatrix} & \text{ if $i$ is even},
\\[3ex]
\begin{bmatrix}b_{i-1}+d_{i-1}v\alpha_{r-i}\\d_{i-1}\end{bmatrix} &\text{ if $i$ is odd}.
\end{cases}$$
By repeatedly applying Lemma~\ref{cf} parts (d) and (e) to $b_i/d_i$, it follows that $b/d=b_r/d_r=[v\alpha_0,u\alpha_1,\dots,v\alpha_{r-1}].$ We must show that $(f\circ C)(b/d)=\llbracket v\alpha_0,u\alpha_1,\dots,v\alpha_{r-1}\rrbracket.$ We can assume that $r\geq 3$ since the result is trivial when $r=1.$

Since $\llbracket v\alpha_0,u\alpha_1,\dots,v\alpha_{r-1}\rrbracket\in A_2$ and $|v\alpha_{r-1}|\geq 3$, then by Lemma~\ref{scf}, $g(\llbracket v\alpha_0,u\alpha_1,\dots,v\alpha_{r-1}\rrbracket)=C(b/d)$. By Proposition~\ref{fg}, we see that
\begin{align*}
(f\circ C)\left(\frac{b}{d}\right) &= (f\circ g)(\llbracket v\alpha_0,u\alpha_1,\dots,v\alpha_{r-1}\rrbracket)\\
&= \llbracket v\alpha_0,u\alpha_1,\dots,v\alpha_{r-1}\rrbracket.
\end{align*}

$(\Leftarrow)$ We obtain the proof by following the same argument given in the first paragraph of the `if' argument in Proposition~\ref{oneFracMon}.
\end{proof}

Theorems~\ref{sanovlike} and~\ref{sanovlikeGp} show that the factorizations of matrices in $S_{u,v}$ and $G_{u,v}$ are almost completely encoded in the short continued fraction representations of the second column entries. (The single potential missing factor can be found easily enough as will be shown in an example.)   Although the encoding in $G_{u,v}$ is not as obvious as that of matrices in $S_{u,v},$ it can be deciphered in a relatively simple way. Note that Corollary~\ref{efc} shows that Theorems~\ref{sanovlike} and~\ref{sanovlikeGp} are indeed closely related to Theorems~\ref{egMon} and~\ref{egGp}.

A careful reading of Proposition~\ref{fg} shows why our hypothesis in Theorem~\ref{sanovlikeGp} requires that $u,v\geq 3.$ In order to include the cases where either $u$ or $v$ (but not both) is equal to 2 requires additional care when manipulating certain elements in $A_1$. In particular, to handle these cases we need a function like $f$ that is sensitive to both the existence \emph{and} location of partial quotients equal to 1 in elements of $A_1.$ The case $u=v=2$, Sanov's result (Theorem~\ref{sanov}), can be derived from the following lemma.

\begin{lemma}\label{22ver}
Suppose that $\alpha,\beta,\gamma\in\mathbb{Z}$ with $\beta>0$ and $\gamma\neq 0$. Then 
\begin{align*}
[\alpha,\beta,\gamma] &=[\alpha, \underbrace{1,0,1,0,\dots,0,1}_{2\beta-1\text{ terms}},\gamma]\\
&= [\alpha+1,\underbrace{-2,2,-2,\dots,2}_{\beta-1\text{ terms}},-(\gamma+1)].
\end{align*}
\end{lemma}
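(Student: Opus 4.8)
The plan is to verify the two displayed identities by direct manipulation of continued fractions, working with the extended-bracket notation and the functions $E$, $f$, $g$ already developed, rather than chasing nested fractions by hand. Since all three quantities are rational numbers, it suffices to show each proposed representation evaluates (under $E$) to $[\alpha,\beta,\gamma]$; uniqueness of short continued fractions is \emph{not} needed here because none of the three expressions need be short.

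For the first identity, the key observation is the elementary fact that
\[
\beta = \underbrace{1+\cfrac{1}{1+\cfrac{1}{1+\ddots}}}_{2\beta-1\text{ ones}},
\]
i.e. $E(\llbracket \underbrace{1,0,1,0,\dots,0,1}_{2\beta-1}\rrbracket)=\beta$, which follows by an easy induction on $\beta$: inserting a ``$1,0$'' in front corresponds, via Lemma~\ref{Eid} with $p_0=0$, to the map $t\mapsto 1+\frac{1}{1+t}$, which by~\eqref{cfid} sends $\beta-1$ to $\beta$ after the bookkeeping; alternatively one just checks $E(\llbracket 1,0,1\rrbracket)=2$ and recurses. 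Then, using Lemma~\ref{Eid} to splice this block into position between $\alpha$ and $\gamma$ — here the trailing $1$ of the block meets $\gamma\neq 0$, so the ``$p_0\neq 0$'' branch applies and no collision occurs — we get $E(\llbracket\alpha,1,0,\dots,0,1,\gamma\rrbracket)=\alpha+\frac{1}{\beta+1/\gamma}=[\alpha,\beta,\gamma]$, which is what we want. Care is needed only at the small cases $\beta=1$ (the block is just $\llbracket 1 \rrbracket$ and $[\alpha,1,\gamma]=\alpha+\frac1{1+1/\gamma}$, matching~\eqref{cfid} directly) to make sure the underbrace count $2\beta-1$ is read correctly.

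For the second identity, the cleanest route is to recognize it as exactly an iterated application of Lemma~\ref{tran} (equivalently, of the definition of $f$). Starting from the first representation $\llbracket\alpha,1,0,1,0,\dots,0,1,\gamma\rrbracket$, repeatedly apply the transformation of Lemma~\ref{tran} to the leftmost remaining $1$: the first application turns the leading $\alpha,1,\dots$ into $\alpha+1$ followed by the negative of the tail with its first entry incremented, i.e. it produces $\llbracket\alpha+1\rrbracket\oplus -\llbracket 0+1, 1,0,\dots\rrbracket=\llbracket\alpha+1,-2,\dots\rrbracket$ after the $\oplus$ merges the leading $0$; iterating, each $1,0$ pair in the interior becomes a $-2$ with alternating signs handled by the nested negations, and the final $1,\gamma$ at the tail contributes the terminal $-(\gamma+1)$. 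This is precisely $(E\circ f)$ applied to the first bracket, so by Corollary~\ref{ef} it has the same $E$-value, namely $[\alpha,\beta,\gamma]$. One should double-check the sign pattern: each nested ``$-f(\cdots)$'' flips the sign of the whole remaining block, and since the interior entries are all $\pm 2$ with the innermost being $-(\gamma+1)$, the published pattern $-2,2,-2,\dots,2$ of length $\beta-1$ followed by $-(\gamma+1)$ should emerge; it is worth confirming the parity/length of the $\pm2$ run and the sign on $\gamma+1$ against the $\beta=1,2$ base cases explicitly.

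The main obstacle I anticipate is purely combinatorial bookkeeping: tracking how the $\oplus$ operation merges the $0$'s that appear at the front of each sub-block (the ``otherwise'' branch in the definition of $\oplus$), and keeping the alternating signs straight through the nested $-f(\cdots)$ recursion, so that the indices and the underbrace counts ($2\beta-1$ ones, $\beta-1$ twos) come out exactly right. There is no analytic difficulty — everything rests on~\eqref{cfid} and Corollary~\ref{ef} — but the proof will want either a clean induction on $\beta$ for both identities simultaneously, or an explicit statement that the second identity \emph{is} $(E\circ f)$ of the first together with a one-line induction verifying the shape of $f(\llbracket\alpha,1,0,1,0,\dots,0,1,\gamma\rrbracket)$.
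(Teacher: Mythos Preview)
The paper does not actually supply a proof of this lemma; it is stated and then immediately used in the proof of Theorem~\ref{sanov}. So there is nothing to compare your argument against, and the implicit expectation is a short direct verification using identity~\eqref{cfid}.

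Your plan has the right mathematical content (iterate~\eqref{cfid}), but it cannot be carried out through the paper's formal machinery the way you propose. The set $A=\bigcup_{r\ge 0}(\mathbb{Z}\times\mathbb{Z}_{\neq 0}^r)$ forbids zeros in any slot after the first, so the vectors $\llbracket\alpha,1,0,1,0,\dots,0,1,\gamma\rrbracket$ and $\llbracket 1,0,1,\dots,0,1\rrbracket$ you manipulate do not lie in $A$ at all, let alone in $A_0$, $A_1$, or $A_2$. Consequently $E$, $f$, Lemma~\ref{tran}, Lemma~\ref{Eid}, and Corollary~\ref{ef} are simply undefined on them, and the ``apply $f$ recursively to $\llbracket\alpha,1,0,1,\dots\rrbracket$'' step is not licensed by anything in the paper. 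Your $\oplus$-merging remark (``the $\oplus$ merges the leading $0$'') likewise mixes up the branches: the first entry of the right-hand block after one formal $f$-step is $0+1=1$, not $0$, so the ``$p_0=0$'' clause of $\oplus$ never fires there.

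The repair is straightforward: abandon the bracket formalism for this lemma and work directly with the continued-fraction notation $[\,\cdot\,]$, which (as introduced in the paper) does allow intermediate zeros. For the first identity, a one-line induction on $\beta$ using the elementary fact $[\,1,0,x\,]=1+x$ gives $[\,1,0,1,0,\dots,0,1,\gamma\,]=\beta+1/\gamma$, whence $[\alpha,1,0,\dots,0,1,\gamma]=[\alpha,\beta,\gamma]$. For the second identity, induct on $\beta$ using~\eqref{cfid} at the outermost layer: $[\alpha,1,0,1,\dots,1,\gamma]=[\alpha+1,-1,-0,-1,\dots,-1,-\gamma]=[\alpha+1,-2,\dots]$ after collapsing the $-1,-0$ into $-1-1=-2$ followed by the negation of the remaining tail, which is one $\beta$-step shorter. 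When you carry this out you will see that the sign attached to $(\gamma+1)$ in fact alternates with the parity of $\beta$ (it is $(-1)^{\beta}(\gamma+1)$), so the displayed ``$-(\gamma+1)$'' should be read as part of the alternating pattern rather than literally; your own suggestion to check the $\beta=1,2$ base cases would have surfaced this.
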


\begin{proof}[Proof of Theorem~\ref{sanov}]
Let $M=\begin{bmatrix}
    a & b \\
    c & d
    \end{bmatrix}\in \mathscr{G}_{2,2}$ and let $[q_0,q_1,\dots,q_r]$ be the short continued fraction representation of $b/d.$ If $r=0$, then $q_0$ must be even, so we can assume that $r>0$. By repeatedly applying Lemma~\ref{22ver} from left to right with $\alpha$ representing the leftmost odd partial quotient in the continued fraction representation of $b/d,$ we obtain a continued fraction representation $$\frac{b}{d}=[2\alpha_0,2\alpha_1,\dots,2\alpha_{s-2},\alpha_{s-1},\alpha_s].$$ By Theorem~\ref{egGp}, we must show that this continued fraction can be manipulated to an equivalent form with all even partial quotients.
    
    \noindent{\bf Case 1:} If $\alpha_{s-1}$ and $\alpha_s$ are even, the result is automatic.
    
    \noindent{\bf Case 2:} If $\alpha_{s-1}$ is odd and $\alpha_s=1$, use the fact that 
    $$[2\alpha_0,2\alpha_1,\dots,2\alpha_{s-2},\alpha_{s-1},1]=[2\alpha_0,2\alpha_1,\dots,2\alpha_{s-2},\alpha_{s-1}+1].$$
    
    \noindent{\bf Case 3:} If $\alpha_{s-1}$ is odd and $\alpha_s\neq 1$, use the fact that $$[2\alpha_0,2\alpha_1,\dots,2\alpha_{s-2},\alpha_{s-1},\alpha_s]=[2\alpha_0,2\alpha_1,\dots,2\alpha_{s-2},\alpha_{s-1},\alpha_s-1,1]$$ and apply Lemma~\ref{22ver} to the last three partial quotients.
    
    \noindent{\bf Case 4:} Suppose $\alpha_{s-1}$ is even and $\alpha_s$ is odd. If $s$ is odd, then, as in the proof of Proposition~\ref{oneFracMon}, we have a matrix $N = R_2^{-\alpha_{s-1}/2}\cdots L_2^{-\alpha_1}R_2^{-\alpha_0}M$ whose second column entries have ratio $1/\alpha_s,$ a contradiction of the fact that $N\in\mathscr{G}_{2,2}$. A similar contradiction occurs if $s$ is even. Therefore, this case cannot occur. 
\end{proof}


\section{An example}

We conclude with an example of how Theorem~\ref{sanovlikeGp} can be used.

Let $M = \begin{bmatrix}10105 & 2457 \\ -3648 & -887 \end{bmatrix}.$ Suppose that we wish to determine if $M$ is in the group $G_{4,3}.$ First note that $(10105)(-887)- (2457)(-3648)=1$, $10105\equiv 1\pmod{12}$, $2457\equiv 0\pmod{3}$, $-3648\equiv 0\pmod{4}$, and $-887\equiv 1\pmod{12}.$ So, indeed, $M\in\mathscr{G}_{4,3}.$ Now $$-\frac{2457}{887}=[-3,4,2,1,6,1,8],$$
so
\begin{align*}
    (f\circ C)\left(-\frac{2457}{887}\right) &= f(\llbracket -3,4,2,1,6,1,8\rrbracket)\\
    &= \llbracket -3,4,3\rrbracket\oplus -f(\llbracket7,1,8\rrbracket)\\
    &= \llbracket -3,4,3\rrbracket\oplus -(\llbracket 8\rrbracket\oplus - f(\llbracket9\rrbracket))\\
    &= \llbracket -3,4,3\rrbracket\oplus -(\llbracket 8\rrbracket\oplus - \llbracket9\rrbracket)\\
    &= \llbracket -3,4,3\rrbracket\oplus -\llbracket 8,-9\rrbracket\\
    &= \llbracket -3,4,3,-8,9\rrbracket.
\end{align*}
Since $\llbracket -3,4,3,-8,9\rrbracket$ satisfies the $(4,3)$-divisibility property, then $M\in G_{4,3}.$ Furthermore, $$R_3^{-3}L_4^2R_3^{-1}L_4^{-1}R_3\begin{bmatrix}10105 & 2457 \\ -3648 & -887 \end{bmatrix} = L_4,$$ so $M=R_3^{-1}L_4R_3L_4^{-2}R_3^3L_4.$


\section{Acknowledgements}
We would like to thank Alexander Rozenblyum for his careful translation of Sanov's article. The second author received support for this project provided by a PSC-CUNY award, \#61157-00 49, jointly funded by The Professional Staff Congress and The City University of New York. The authors are grateful to the anonymous referee  for the valuable comments that improved the presentation of our work.



\begin{thebibliography}{99}


\bibitem{CGS}
A. Chorna, K. Geller, and V. Shpilrain, On two-generator subgroups in $SL_2(\mathbb{Z})$, $SL_2(\mathbb{Q})$, and $SL_2(\mathbb{R})$, J. Algebra {\bf 478} (2017), 367--381.



\bibitem{EG1}
H.-A. Esbelin and M. Gutan, On the membership problem for some subgroups of $SL_2(\mathbb{Z})$, Ann. Math. Qu\'{e}bec, {\bf{43}} (2019), 233–-247.


\bibitem{EG2}
H.-A. Esbelin and M. Gutan,   Solving the membership problem for parabolic M\"{o}bius monoids, Semigroup Forum {\bf{98}} (2019), 556--570. 


\bibitem{HMST2}
S. Han, A.M. Masuda, S. Singh, and J. Thiel, The ($u$,$v$)-Calkin-Wilf forest, Int. J. Number Theory {\bf 12} (2016), no. 5, 1311--1328.


\bibitem{LS}
R.C. Lyndon and  P.E. Schupp, {\em Combinatorial Group Theory}, Classics in Mathematics, Springer-Verlag, Berlin, 2001, Reprint of the 1977 edition.

\bibitem{N1}
M.B. Nathanson, Pairs of matrices in $GL_2(\mathbb{R}_{\geq 0})$ that freely generate, Amer. Math. Monthly {\bf 122} (2015), no. 8, 790--792.


\bibitem{S}
I.N. Sanov, {\em A property of a representation of a free group} (Russian), Doklady Akad. Nauk SSSR (N. S.) {\bf 57} (1947), 657--659.



\end{thebibliography}
\end{document}